\definecolor{webgreen}{rgb}{0,.5,0}
\definecolor{webbrown}{rgb}{.6,0,0}
\theoremstyle{plain}
\newtheorem{theorem}{Theorem}
\newtheorem{corollary}[theorem]{Corollary}
\newtheorem{lemma}[theorem]{Lemma}
\theoremstyle{definition}
\newtheorem{example}[theorem]{Example}
\theoremstyle{remark}
\begin{document}

\title{The No-Flippancy Game}
\author{
Isha Agarwal \and
Matvey Borodin \and
Aidan Duncan \and
Kaylee Ji \and
Shane Lee \and
Boyan Litchev \and
Anshul Rastogi \and
Garima Rastogi \and
Andrew Zhao \\
PRIMES STEP\\
\\
Tanya Khovanova\\
MIT
}

\maketitle

\begin{abstract}
We analyze a coin-based game with two players where, before starting the game, each player selects a string of length $n$ comprised of coin tosses. They alternate turns, choosing the outcome of a coin toss according to specific rules. As a result, the game is deterministic. The player whose string appears first wins. If neither player's string occurs, then the game must be infinite.

We study several aspects of this game. We show that if, after $4n-4$ turns, the game fails to cease, it must be infinite. Furthermore, we examine how a player may select their string to force a desired outcome. Finally, we describe the result of the game for particular cases.
\end{abstract}

\section{Introduction}
\label{introduction}

In this paper, we extensively analyze the \textit{No-Flippancy game}. It is inspired by the well-known Penney's coin game. Penney's game is defined as follows:

\begin{quote}
\textit{Penney's game.} Two players, Alice and Bob, individually select separate strings of a fixed length $n$. These strings are comprised of coin flip outcomes: H (denoting heads) and T (denoting tails). They toss a fair coin repeatedly until one player's selected string appears in the sequence of tosses. This player is then declared the winner.
\end{quote}

Penney's game produces interesting results and is thoroughly studied and discussed in \cite{BCG,C,F,MGSA,MG,GO,HN,WP,RV}.

We devised the No-Flippancy game to create a purely deterministic game akin to Penney's but independent of probabilities. The game is played by the following rules:

\begin{quote}
\textit{No-Flippancy Game.} As is in Penney's game, Alice and Bob choose distinct strings of length $n$ consisting of the letters H (for heads) and T (for tails). The two players alternate selecting the outcome of the next ``flip'' to add to the sequence by the rule delineated below:

	\textbf{The ``flip'' rule:} Let $i < n$ be the maximal length of a suffix of the sequence of chosen outcomes that coincides with a prefix of the current player's string. The player then selects the element of their string with index $i+1$ as the next term in the sequence. 

	Alice goes first, and whoever's string appears first in the sequence of choices wins.
\end{quote}

We demonstrate this game with an example.

Suppose $n=4$. Let Alice's chosen string be HHTT, and Bob's chosen string be THHH.

Alice begins by selecting the first term of the game outcome to be H. Here, $i=0$ for Bob, so he selects T as the next term. Our game outcome thus far is HT. As a result of his move, $i=0$ for Alice now, so she assigns the next flip to be H. The game outcome develops into HTH. For Bob, $i=2$ now. As a result, he takes the next term to be H. This makes the game outcome HTHH. Continuing as such, the following sequence of flip outcomes is produced:

\[ \textrm{HTHHTHHH.}\]

The game ceases here as the string THHH of one of the players, Bob, appears. Hence, he wins. 

This game and its many intriguing characteristics are the focus of this paper. In Section~\ref{sec:defs}, we introduce definitions, notations, and assumptions. In Section~\ref{sec:ig}, we illustrate that, given the string of length $n$, if the game lasts for over $4n-4$ turns, it must be infinite.

In Section~\ref{sec:fo}, we study how a player, when provided with the opponent's chosen string, may select a string to force a particular outcome. Furthermore, we demonstrate that, with a few exceptions, each player can force a win or an infinite game. Forcing a loss, which we discuss as well, is more complicated.

In Section~\ref{sec:whowins}, we discuss cases where we can determine the victor solely from the chosen strings of the two players. We describe numerous cases that result in an infinite game as well as cases where Alice's and Bob's strings share a common substring of lengths $n-1$ and $n-2$. As we conclude this section, we explore particular instances where one player has a string consisting of either entirely the same letter or alternating Hs and Ts.

In Section~\ref{sec:cd}, we display data that we computed. For $n < 9$, we calculate the maximal possible number of turns in a finite game. We also compute the number of string pairs that Alice and Bob can choose to attain a particular outcome: Alice wins, Bob wins, or the game is infinite.

\section{Definitions}\label{sec:defs}

First, we denote Alice's string by $A$, where $A = A_{1}A_{2}A_{3} \cdots A_{n}$. Each $A_{i}$ is either H (representing a \textit{heads} outcome) or T (representing a \textit{tails} outcome). Similarly, Bob's string is denoted by $B$, where $B = B_{1}B_{2}B_{3} \cdots B_{n}$, with each $B_{i}$ being either H or T. In this game, Alice and Bob have strings of the same length $n$.

We say that two strings are \textit{complementary} to one another if, for each letter H in a given position in one string, there is a T in the same position in the other string and vice versa. We denote the complement of a string by using an overline: $\overline{\textrm{H}} = \textrm{T}$ and $\overline{\textrm{T}} = \textrm{H}$. For example, $\overline{\textrm{HHT}} = \textrm{TTH}$.

Without loss of generality, we may assume that Alice's string begins with H.

We say that a string $A = A_{1}A_{2}A_{3} \cdots A_{n}$ \textit{alternates} if $A_i \neq A_{i+1}$, where $1 \leq i < n$.

Let us denote the sequence of letters that represents the game output after $k$ tosses as $G^k$, where $G^k = G_{1}G_{2}G_{3} \cdots G_{k}$, with each $G_{i}$ being either H or T.

We define Alice's progress as the length of the largest suffix in $G^k$ that is a prefix of Alice's string. Formally, Alice's progress is the largest $i$ such that $A_{1}A_{2}A_{3} \cdots A_{i} = G_{k-i+1}G_{k-i+2}\cdots G_{k-1}G_{k}$.  We denote Alice's progress at the $k$-th step of the game as $a_k$. We define Bob's progress similarly and denote it as $b_k$. If we are referring to one of the players, Alice or Bob, we denote this player's progress as $p_k$. By the definition of the game, on player $P$'s turn, player $P$ selects a toss such that $p_{k+1} = p_k +1$. 

The continuation of the game is entirely defined by the pair of numbers $(a_k,b_k)$ and whose turn it is. We call the triplet $(a_k,b_k,P)$ \textit{the state of the game}. Here, the element $P$ represents the player with the current turn. The game always starts as $(0,0,A)$. After the first turn the state of the game might be either $(1,0,B)$ if Alice and Bob have different first characters in their strings or $(1,0,B)$ if the characters are the same.

The states of the example game above are as follows:

\[(0,0,A), (1,0,B), (0,1,A), (1,2,B), (2,3,A), (3,1,B), (1,2,A), (2,3,B), (3,4,A).\]

Since $b_k$ reaches $n$ before $a_k$, Bob wins.

If it is Alice's turn, then the state is $(a_k,b_k,A)$, and at the next turn it is $(a_{k+1},b_{k+1},B)$, where $a_{k+1} = a_k +1$, and $b_{k+1} \leq b_k + 1$. Bob's turn changes the state in a similar fashion.  

We call the number $\max\{a_k,b_k\}$ the \textit{progress} of the game after $k$ tosses. The game ends in a win when the progress reaches $n$.

We call a move \textit{synchronized} when both players want the same toss. In other words, when the state of the game changes from $(a_k,b_k,P_1)$ to $(a_k+1,b_k+1,P_2)$, the move is synchronized.

\section{The infinite game}\label{sec:ig}

It is possible for the game to extend infinitely, with neither player's string ever occurring; for example, if Alice's string is HH and Bob's string is TT, then the game output becomes an infinite alternating string HTHTHT$\ldots$, and no one wins.

When utilizing a computer to calculate the progress of a game, it is useful to know when to stop the calculation. The infinite games must be periodic, but at what point do we know that a game must be infinite?

It is not enough to observe that the beginning of the output is periodic. For example, if Alice's string is HHHHHHHH and Bob's is HTHTHTHT, the output string commences periodically as HTHTHT but is not an infinite string.

The following lemma allows one to recognize an infinite game.

\begin{lemma}
If the game takes longer than $4n-4$ turns, where $n$ is the length of Alice's and Bob's strings, then it must be infinite.
\end{lemma}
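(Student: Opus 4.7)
The plan is to show that if the game has not ended after $4n-4$ turns, then some state $(a_k,b_k,P_k)$ must have repeated among the first $4n-3$ states $S_0,S_1,\ldots,S_{4n-4}$ of the game; since the dynamics are fully deterministic in the state, any repetition forces an infinite game. Hence it suffices to bound the number of reachable states and apply pigeonhole.

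The main structural tool is the following: in any reachable state $(a,b,P)$, the prefixes $A_1A_2\cdots A_a$ and $B_1B_2\cdots B_b$ are both suffixes of the game output $G^k$, so whichever is shorter is a suffix of the longer. In particular, if both $a\ge 1$ and $b\ge 1$, the last letter of $G^k$ equals both $A_1$ and $B_1$, so $A_1=B_1$. I also use the elementary fact that a Bob move increases $b$ by exactly $1$ (so $b\ge 1$ immediately after any Bob turn), and symmetrically for Alice. Together these constraints sharply restrict which pairs $(a,b)$ can appear with each player turn.

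The proof then splits on whether $A_1\ne B_1$ or $A_1=B_1$. If $A_1\ne B_1$, every reachable state has $a=0$ or $b=0$. Combined with the increment rule, reachable Alice-turn states other than $(0,0,A)$ satisfy $a=0$, giving at most $n$ such states; reachable Bob-turn states satisfy $b=0$ with $a\in\{1,\ldots,n-1\}$, giving at most $n-1$. Thus at most $2n-1$ distinct states are reachable in this case, and pigeonhole forces a repetition well within $4n-4$ turns.

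The harder case is $A_1=B_1$. Let $L$ denote the length of the longest common prefix of $A$ and $B$, so $1\le L\le n-1$; the first $L$ moves of the game are forced through the lockstep states $(i,i,P_i)$ for $0\le i\le L$. From move $L+1$ onward, $A_{L+1}\ne B_{L+1}$ restores an asymmetry similar to the first case, but now pairs $(a,b)$ with $1\le\min(a,b)\le L$ are allowed, and a naive count of reachable states is larger. The main obstacle is to control the count here: I would handle it via a KMP-style analysis of how each player's progress drops when the played letter disagrees with the other's expected letter, combined with the rigid string-alignment identity $A_1\cdots A_a=B_{b-a+1}\cdots B_b$ (when $a\le b$) and its symmetric counterpart. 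The goal is to show that the total reachable-state count stays at most $4n-4$ in this case, so that pigeonhole once more forces repetition and the game is infinite.
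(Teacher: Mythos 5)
Your overall strategy---bound the number of reachable states, then invoke determinism plus pigeonhole---is exactly the paper's, and your first case ($A_1 \neq B_1$) is correct and even gives a sharper count than needed. The problem is that your second case, $A_1 = B_1$, which is the generic and harder one, is left as a plan rather than a proof: you say you ``would handle it via a KMP-style analysis'' whose ``goal'' is to show the reachable-state count stays at most $4n-4$, but you never carry out that analysis. As written, the argument is incomplete precisely where the real work lies, so there is a genuine gap.

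The missing step is short, and you already hold the key identity. From $A_1\cdots A_a = B_{b-a+1}\cdots B_b$ (valid whenever $1 \le a\le b$, since both chosen prefixes are suffixes of $G^k$) it follows that the smaller coordinate is \emph{determined} by the larger: if $a_k \le b_k$, then $a_k$ is forced to be the length of the longest suffix of $B_1\cdots B_{b_k}$ that is a prefix of $A$, and symmetrically when $b_k \le a_k$. Consequently, for each value $m$ of the game's progress $\max(a_k,b_k)$ there are at most \emph{four} states: two choices for which player attains the maximum (the other coordinate then being forced), times two choices for whose turn it is. This holds with no case split on whether $A_1 = B_1$ and no KMP-style machinery. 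The paper then trims the extremes---level $m=0$ admits only $(0,0,A)$ because a player's progress is nonzero right after their own move; level $m=1$ admits at most two states, since the presence of $(1,1,P)$ forces $A_1=B_1$ and thereby excludes $(0,1,P)$ and $(1,0,P)$; and level $m=n$ admits one state since the game ends there---yielding $1+2+4(n-2)+1 = 4n-4$ distinct states in total. Until you supply an argument of this kind in place of your sketched second case, the proof does not go through.
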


\begin{proof}
Suppose, after $k$ tosses, Alice is not winning; that is, $a_k \leq b_k$. In this case, the value of $a_k$ is uniquely defined. Indeed, if one were to take the prefix of Bob's string of length $b_k$, then $a_k$ is the largest suffix of this string that is a prefix of Alice's string. This means that for any maximum progress $m$, there are no more than 4 different states of the game with this maximum: the maximum and the move could belong to either Alice or Bob.

Additionally, after Alice's turn, $a_k$ cannot be 0. The same is true for Bob. This means that the following states can never occur: $(0,0,B)$, $(0,1,B)$, and $(1,0,A)$. Thus, there is only one state of the game with a maximum of 0: $(0,0,A)$.

Let us consider a maximum progress of 1. If the game contains state $(1,1,P)$, then both Alice's and Bob's string start with the same letter. This means that if the game contains state $(1,1,P)$, it cannot contain states $(0,1,P)$ or $(1,0,P)$. Therefore, there are no more than two states in the game with a maximum progress of 1.

For the maximum progress $n$, the game can only contain one such state as it immediately ends.

To recapitulate, we have not more than one state with the maximum progress of 0, not more than two states with the maximum progress of 1, not more than one state with the maximum progress of $n$, and not more than four states for the maximum progress of $i$, where $1 < i < n$. The total bound is $1+2+1 + 4(n-2) = 4n-4$.

The state of the game uniquely defines the rest of the game. This means that if the state of the game repeats at any point, the game must be infinite. As a result, a finite game cannot have more than $4n-4$ turns.
\end{proof}

One might wonder how to calculate the precise maximum length of a finite game. In Section ~\ref{sec:cd}, we present data for the maximum length of finite games for values of $n$ less than 9. They are all noticeably less than $4n-4$. In the proof of Theorem~\ref{thm:Aliceforcesloss}, we see examples of games that take $n$ or $n+1$ turns. Additionally, the proof contains an example of a game that takes $n+x$ turns, where $n$ and $x$ are odd and $x \leq n-2$. Thus, there are games with up to $2n-2$ turns.

\section{Forcing the outcome}\label{sec:fo}

Suppose one player announces their string first, and the other player wishes to obtain a particular outcome: a win, loss, or tie. In this section, we discuss how the other player may construct their string to attain their desired game outcome.

\subsection{Forcing an infinite game}

We demonstrate that each player can force an infinite game, except in specific cases where the strings are short and alternate.

\subsubsection{Bob forces an infinite game}

If Bob wants an infinite game, he can force it with a few exceptions. 

\begin{lemma}
Assume that Alice's string starts with H, and Bob chooses his string of the same length afterward. Bob can force an infinite game for any string Alice chooses except H, HT, HTH, or HTHT. 
\end{lemma}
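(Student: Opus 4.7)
The plan is to construct, for each non-exceptional Alice string $A$, an explicit Bob string $B$, trace the deterministic state trajectory to show it enters a cycle avoiding every winning state, and verify that neither $A$ nor $B$ occurs as a substring of the eventually periodic output.

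I would first dispose of the four exceptions $A \in \{H, HT, HTH, HTHT\}$ by brute force. Since $n \leq 4$, there are at most fifteen possible Bob strings of the appropriate length, and each game terminates in at most $4n - 4 \leq 12$ moves by the previous lemma, so direct enumeration shows that every trajectory ends in a win.

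For non-exceptional $A$, I would classify by the length $\ell$ of the longest alternating prefix of $A$ (so $A_i \neq A_{i+1}$ for $i < \ell$, and $A_{\ell+1} = A_\ell$ when $\ell < n$). If $A$ is alternating itself ($\ell = n$), then $n \geq 5$ by hypothesis, and I would take $B$ to be two heads followed by $n-2$ tails. Direct state computation shows the trajectory enters the four-cycle
$$(1,2,A) \to (2,3,B) \to (0,4,A) \to (1,1,B) \to (1,2,A),$$
with eventual output HHTTHHTT\ldots. This output has at most two consecutive equal letters and at most two alternating characters between blocks, so neither the alternating $A$ (length $\geq 5$) nor $B$ (requiring $n - 2 \geq 3$ consecutive T's) appears as a substring. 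If $A$ is non-alternating with $\ell$ odd, I would take $B = \overline{A}$; tracing shows the trajectory stabilizes to alternating output HTHT\ldots, in which neither $A$ (containing HH at positions $\ell, \ell+1$) nor $\overline{A}$ (containing TT there) appears. If $A$ is non-alternating with $\ell$ even, I would take $B$ to be the all-heads string of length $n$; tracing shows the trajectory enters a short cycle whose output has only isolated T's and at most two consecutive H's, so neither $A$ (which contains HTT at positions $\ell - 1, \ell, \ell + 1$) nor $B$ (requiring $n \geq 3$ consecutive H's) appears.

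The main obstacle is the last case, non-alternating $A$ with $\ell$ even, such as $A = HTT$ or $A = HTHTT$. The natural choice $B = \overline{A}$ fails there: Alice's progress reaches $\ell$ during an initial alternating transient, and she then plays $A_{\ell+1} = T$, completing her prefix of length $\ell + 1$ and, for small $n$, winning immediately. Switching to the all-heads string requires one to verify that the induced cycle always has only isolated T's and at most two consecutive H's, ruling out every $A$ in this family; this verification is a finite but delicate state-machine calculation whose details depend on exactly where $A$ breaks the alternating pattern.
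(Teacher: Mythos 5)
Your proposal is correct and shares the paper's overall skeleton (finite check for the four exceptions; the string HHTT$\ldots$T against an alternating $A$ with $n\geq 5$, producing the periodic output HHTTHHTT$\ldots$), but it diverges in the non-alternating case, where the paper has a single uniform trick that you replace with a parity split. The paper observes that if $A$ contains a repeated pair XX, Bob can simply play the constant string $\overline{\textrm{X}}^n$: then every other character of the output is $\overline{\textrm{X}}$, so $A$ can never appear, and Bob cannot complete a run of $n$ copies of $\overline{\textrm{X}}$ because Alice's string begins with a letter that eventually interrupts it. This one observation handles both of your subcases at once and needs no trajectory tracing. Your choice of the all-heads string when $\ell$ is even is an instance of this (the repeated pair is TT), and the ``delicate state-machine calculation'' you defer does close: the output is HH followed by $(\textrm{TH})^\infty$, Alice's progress is capped at $\ell<n$, and Bob's at $2<n$. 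Your choice of $B=\overline{A}$ when $\ell$ is odd is a genuine alternative not in the paper; it works because on the alternating output both players' progress is capped at the even number $\ell-1$, so each keeps playing the alternating continuation, and both $A$ (containing HH) and $\overline{A}$ (containing TT) are excluded from an alternating string. So nothing in your plan fails, but the constant-string construction buys you a two-line argument where your version requires verifying two separate limit cycles.
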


\begin{proof}
First, we show that an infinite game is impossible if Alice chooses H, HT, HTH, or HTHT. 
\begin{itemize}
\item H: If Alice chooses H, she wins immediately, as Bob is not allowed to pick the same string as Alice.
\item HT: For the first turn, Alice picks H. For the second turn, Bob either picks T and wins for Alice or picks H. If Bob does not win on the second turn, Alice wins on her next turn.
\item HTH: Alice chooses H on her first move. If, after the first two moves, the output is HT, then Alice wins on the next move. If the output after two moves is HH, then Bob's string must start with HH. On the third move, Alice picks T. If Bob's string is HHT he wins. Otherwise, his string is HHH and he has to pick H. In this case, Alice wins.
\item HTHT: Alice starts with H. We consider what happens next  depending on the prefix of Bob's string. 
\begin{itemize}
\item HH: Suppose Bob's string starts with HH. Then Bob chooses H on the second turn, and Alice chooses T afterward. If Bob's string started with HHT, he wins the next move. Otherwise, his string starts with HHH. In this case, Bob picks H. Then Alice picks T and wins, so the game does not result in an infinite game.
\item HT: First, Alice chooses H. Bob chooses T. Alice chooses H. If Bob's string starts with HTH, he wins on the next turn. Otherwise, his string starts with HTT, and he picks T, so Alice wins, and the game does not result in an infinite game.
\item T: First, Alice chooses H. Bob chooses T, and then Alice picks H. If Bob's string starts with TT or THT, he picks T next, and Alice wins. If it starts with THH, he chooses H next. Alice then picks T, and the output now is HTHHT. If Bob's string is THHT, he wins. Otherwise, his string is THHH. In this case, Bob chooses H, and Alice chooses T. Alice wins, and the game does not result in an infinite game.
\end{itemize}
\end{itemize}

Now, we show that for other strings, Bob can force an infinite game.

Suppose Alice chooses a string with two consecutive terms that are the same. Without loss of generality, we can assume that these terms are HH. Then, Bob can choose a string comprised of only Ts. This way, Bob always chooses a T, and there is no way for Alice to have two Hs in a row. Additionally, Bob cannot win as Alice is guaranteed to pick an H before Bob obtains his string.

Alternatively, Alice could choose a string of alternating Hs and Ts with $n \geq 5$. Without loss of generality, we may assume that her string starts as  HTHTH. Bob can choose any string beginning with HHTTT. The game then proceeds periodically and infinitely as HHTTHHTT$\ldots$.
\end{proof}

\subsubsection{Alice forces an infinite game}

If Alice wishes for an infinite game, she can force it with a small number of exceptions. Without a loss of generality, we can assume that Bob's string starts with H.

\begin{lemma}
If Bob's string starts with H, then, the only possible strings for Bob that always end in a finite game are H, HT, HTH, HTHT, HTHTH.
\end{lemma}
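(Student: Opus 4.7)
The plan is to prove the statement's two implications. First, for each of the five listed strings $H$, $HT$, $HTH$, $HTHT$, $HTHTH$, I verify directly that the game is finite regardless of Alice's choice, by case analysis on Alice's opening moves; this mirrors the style of the previous lemma, with the most involved case being $B = HTHTH$, where one branches on Alice's first few picks and checks termination along each branch (each ends with someone's string appearing within roughly ten turns).

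For the converse, I show that for every other string $B$ starting with $H$, Alice can choose a string forcing
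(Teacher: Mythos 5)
Your proposal is incomplete: it breaks off in the middle of the sentence that was supposed to carry the main content of the lemma, namely the argument that for every string of Bob other than the five exceptions Alice \emph{can} force an infinite game. As written, you have only asserted that such a forcing string exists without exhibiting it, so the converse direction is entirely missing. The paper's proof supplies two explicit constructions here, and you would need something equivalent. First, if Bob's string contains two consecutive equal letters (without loss of generality HH), Alice takes the all-T string: she then always plays T, so Bob can never produce two H's in a row and never completes his string, while Alice cannot win either because Bob plays an H before she can accumulate $n$ T's. Second, if Bob's string alternates and $n \geq 6$ (without loss of generality it begins HTHTHT), Alice takes any string beginning with THHTTT; one checks that the output becomes the eventually periodic sequence THHTTHHTT$\ldots$, in which Alice's progress is capped at $5$ and Bob's at $2$, so neither string ever appears. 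These two cases exhaust all strings of Bob other than H, HT, HTH, HTHT, HTHTH, which is exactly why those five are the exceptions.

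The first half of your proposal --- verifying by exhaustive case analysis that the five exceptional strings always yield finite games --- is at the same level of detail as the paper, which likewise appeals to an exhaustive search modeled on the preceding lemma, so that part is acceptable. But without the explicit constructions above (or some substitute for them), the proof does not establish the statement.
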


\begin{proof}
Similar to the previous section, we can perform an exhaustive search to show that there are no infinite games for Bob's choice of H, HT, HTH, HTHT, or HTHTH. 

Suppose Bob chooses a string with two consecutive terms that are the same. Without loss of generality, we can assume that these terms are HH. Then, Alice can make her string all Ts. This way, Alice always chooses a T, so Bob cannot get two Hs in a row. Additionally, Alice cannot win as Bob always picks an H before Alice finishes her string.

Suppose Bob chooses a string of alternating Hs and Ts with $n \geq 6$. Without loss of generality, we can assume that his string starts as  HTHTHT. Alice can choose any string starting with THHTTT. Then, the game proceeds periodically and infinitely as THHTTHHTT$\ldots$.
\end{proof}

\subsection{Forcing a win}

We show that each player can force a win with one exception: if $n=1$, then Alice always wins. It is possible to force a win in less than $n+2$ turns.

\subsubsection{Bob forces a win for himself}

For $n=1$, Alice will always win.

\begin{lemma}
If $n > 1$ and Bob chooses his string after Alice, Bob can force a win for himself. Moreover, he can win the game in less than $n+2$ turns.
\end{lemma}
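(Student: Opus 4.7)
Here is the plan.

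Bob chooses the Penney-style leading string
\[B = \overline{A_2}\,A_1\,A_2\,A_3 \cdots A_{n-1},\]
which is well-defined because $n \geq 2$. First I would check that $B \neq A$: if $B = A$, then matching position $1$ would force $\overline{A_2} = A_1$, while matching positions $j \geq 2$ would force $A_{j-1} = A_j$, giving $A_1 = A_2$ and contradicting $\overline{A_2} = A_1$.

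The dynamics of the resulting game split cleanly on whether $A_1 = A_2$, since this determines $b_1$. In the case $A_1 \neq A_2$, one has $B_1 = \overline{A_2} = A_1$, so after Alice's opening move $(a_1,b_1) = (1,1)$. Bob plays $B_2 = A_1$, reaching $(a_2,b_2)=(1,2)$ with output $A_1 A_1$. I would then prove by induction that for all $2 \le k \le n$ the state is $(a_k,b_k)=(k-1,k)$ and the output equals $A_1 A_1 A_2 A_3 \cdots A_{k-1}$. The step uses the identity $B_j = A_{j-1}$ for $j \geq 2$, so whoever moves on turn $k$ appends the letter $A_{k-1}$; the maximal-suffix checks are immediate, because a match of length $k$ with a prefix of $A$ would require $A_1 = A_2$, which is excluded in this case. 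At $k=n$ we get $b_n = n$, and Bob wins on turn $n$.

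In the case $A_1 = A_2$, one has $B_1 = \overline{A_1}$, so $b_1 = 0$. Computing the first three turns gives output $A_1 \overline{A_1} A_1$ with $(a_3,b_3)=(1,2)$, after which I would induct on $k$ to show that for $3 \le k \le n+1$ the output is $A_1 \overline{A_1} A_1 A_2 A_3 \cdots A_{k-2}$ and the state is $(a_k,b_k)=(k-2,k-1)$. At each step, any prefix match for either player longer than the claimed one would require $\overline{A_1} = A_1$, which is impossible; the appended letter is again $A_{k-1}$ by $B_j = A_{j-1}$. Bob wins at turn $n+1$, which is less than $n+2$.

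The main obstacle is not difficulty but bookkeeping: at each inductive step one must certify that the claimed $a_k$ and $b_k$ are truly maximal, while tracking the alternating turns and the relation $B_j = A_{j-1}$. Because the forbidden equalities ($\overline{A_1} = A_1$ and, in the first case, $A_1 = A_2$) are obvious, each verification collapses to a one-line observation once the correct invariant is stated.
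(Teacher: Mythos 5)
Your proof is correct, but it takes a genuinely different route from the paper's. The paper also builds $B$ by prepending one letter to $A_1A_2\cdots A_{n-1}$, but its prepended letter depends on the structure of $A$: if $A$ alternates it uses $A_1$, and otherwise it uses $\overline{A_k}$ where $k$ is the first position with $A_k=A_{k+1}$, which forces a further split on the parity of $k$ and an argument in which Bob temporarily falls behind and then overtakes Alice (yielding a game of length $n+1$ when $k$ is odd). You instead prepend $\overline{A_2}$ uniformly --- the classical optimal Penney's-game response --- and split only on whether $A_1=A_2$. This buys a cleaner analysis: in both of your cases the game is synchronized from turn $2$ or $3$ onward with Bob exactly one step ahead, the invariants $(a_k,b_k)=(k-1,k)$ or $(k-2,k-1)$ are easy to certify since the only obstructions are $A_1=A_2$ or $A_1=\overline{A_1}$, and you still get termination by turn $n$ or $n+1$. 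Your explicit check that $B\neq A$ is a detail the paper glosses over. The only thing the paper's version buys in exchange is the family of length-$(n+1)$ games with a visible ``restart,'' which it reuses as examples elsewhere; your construction produces games of length $n+1$ only in the $A_1=A_2$ case but is otherwise strictly simpler.
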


\begin{proof}
We have two cases, depending on whether Alice's string alternates or not. 

Suppose Alice's string alternates. Here, Bob can choose $B = A_{1}A_{1}A_{2}A_{3}$ $\cdots$ $A_{n-1}$. After the first two turns, the game outcome is $A_{1}A_{1}$. From here, the moves are synchronized and $b_k = a_k +1$ consistently. Thus, Bob wins on the $n$-th move. 

Suppose Alice's string does not alternate. Consider the minimum $k$ such that $A_{k} = A_{k+1}$. Then, Bob chooses his string as $B = \overline{A_{k}}A_{1}A_{2}A_{3}$ $\cdots$ $A_{n-2}A_{n-1}$. Without loss of generality, we assume that $A_1 = H$. It is worth noting that if $k = 2$, then Bob's string is also the best choice for Bob in the Penney's game \cite{F}.

Suppose $k$ is odd. Then $A_k = $H, and both Bob's and Alice's strings alternate for the first $k$ characters, because $A_k$ is defined as the \textbf{first} occurrence of two letters in a row. The output of the game is the alternating string for the first $k$ turns, so the state of the game is $(k,k-1,B)$ Then, Bob chooses T on turn $k+1$ as he is behind Alice in his alternating prefix, making the state of the game $(k-1,k,A)$. After that, the moves are synchronized with $b_k = a_k +1$. Bob wins on turn $n+1$.

Suppose $k$ is even. Since $A_k = T$ and $A_1 = H$, Bob's string starts as HH. Alice picks H on her first move, and Bob selects H afterward. Following this, the moves are synchronized with Bob being one character ahead of Alice. Bob wins on move $n$.
\end{proof}

Below are some examples.

\begin{example} Suppose $A$ = HTHT. Alice's string alternates. In accordance with rules above, Bob chooses HHTH. The game output is HHTH.
\end{example}

\begin{example} Suppose $A$ = HTHHT. In this case, $k = 3$, since $A_{3} = A_{4} = H$. Thus, Bob's string is THTHH. The game proceeds as HTHTHH and Bob wins.
\end{example}

\begin{example} Suppose $A$ = HTTH. In this case, $k = 2$, since $A_{2} = A_{3} = T$. Thus, Bob's string is HHTT. The game proceeds as HHTT and Bob wins.
\end{example}

\subsubsection{Alice forces a win for herself}

Next, we discuss when Alice can force a win. 

\begin{lemma}
If Alice chooses her string after Bob, she can force a win for herself. Moreover, she can win in $n$ moves.
\end{lemma}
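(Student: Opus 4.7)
The plan is to give Alice an explicit winning string in closed form, mirroring (with a shift) the construction from the previous lemma: I would have Alice choose $A = \overline{B_1}\,B_1 B_2 \cdots B_{n-1}$. This choice is automatically legal because $A_1 = \overline{B_1} \neq B_1$ forces $A \neq B$, and I claim that with this choice the game output after $k$ tosses is exactly $A_1 A_2 \cdots A_k$, so that on turn $n$ the whole of $A$ appears and Alice wins.

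The bulk of the argument is a single induction on $k$, maintaining the invariant that the output equals $A_1 \cdots A_k = \overline{B_1} B_1 \cdots B_{k-1}$ and that $(a_k, b_k) = (k, k-1)$. The base case $k=1$ follows from Alice's first move, since $A_1 \neq B_1$ gives $b_1 = 0$. For the inductive step, $a_k = k$ is immediate from the output being a prefix of $A$, and $b_k = k-1$ because the length-$(k-1)$ suffix $B_1 \cdots B_{k-1}$ of the output is a prefix of $B$, while no length-$k$ match is possible since the output begins with $\overline{B_1} \neq B_1$. Consequently, on Alice's turn she extends the output by $A_{k+1}$ directly, and on Bob's turn he writes $B_{b_k+1} = B_k$, which equals $A_{k+1}$ by the definition of $A$. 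Either way the output grows to $A_1 \cdots A_{k+1}$, closing the induction. To finish, I would note that the game cannot end before turn $n$ because the output is too short to contain any length-$n$ string, and that Bob's string cannot appear simultaneously with Alice's at turn $n$ since $B$ would then have to coincide with the length-$n$ output $A$, contradicting $A \neq B$. The boundary case $n=1$ is handled by Alice simply writing $\overline{B_1}$ on the first move.

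The step I expect to require the most care is the identity $b_k = k-1$: one must rule out the possibility that a self-overlap inside $B$ inflates Bob's progress to $k$ or more. The single observation that the output begins with $\overline{B_1}$ blocks every such overlap, and this is really the point of starting $A$ with the complement of Bob's first letter; once this identity is pinned down, the rest of the argument unrolls mechanically from the definition of a synchronized move.
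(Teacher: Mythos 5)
Your construction $A = \overline{B_1}B_1B_2\cdots B_{n-1}$ is exactly the one the paper uses, and your induction showing $(a_k,b_k)=(k,k-1)$ is just a careful verification of the paper's one-line claim that the moves are synchronized with Alice one character ahead. The proposal is correct and takes essentially the same approach.
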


\begin{proof}
Suppose Bob's string is $B = B_{1}B_{2}B_{3}$ $\cdots$ $B_{n}$. Then, Alice can choose $A = \overline{B_{1}}B_{1}B_{2}B_{3}$ $\cdots$ $B_{n-1}$. Starting from the second move, each move is synchronized, with Alice being one character ahead. Alice wins on move $n$.
\end{proof}

\subsection{Forcing a loss}

Forcing a loss is more complicated than forcing a win or an infinite game. We have acquired some results in this section.

\subsubsection{Alice forces a loss for herself}

The following lemma describes the case where Alice cannot force a loss for herself.

\begin{lemma}
If $n$ is odd and all the characters in Bob's string are the same, Alice cannot force a loss.
\end{lemma}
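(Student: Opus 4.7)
The plan is to argue by contradiction, assuming some string $A$ of length $n$ leads to a Bob win. By the H/T symmetry I may take $B = \mathrm{H}^n$, so $A \neq \mathrm{H}^n$ contains at least one T. A key preliminary observation is that Bob always plays H, hence every T in the output is played by Alice on an odd-indexed move. If Bob wins at move $N$, the last $n$ output characters form $\mathrm{H}^n$; if $N > n$, then $G_{N-n}$ is a T played by Alice, so $N - n$ is odd, and using $n$ odd, $N$ is even.

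In the case $A_1 = \mathrm{H}$, let $m$ be the initial H-run of $A$, so $1 \leq m \leq n - 1$. For each position $t$ in the winning window, the last $t - N + n$ characters are H, yielding $a_t \geq \min(m,\, t - N + n)$ by matching the all-H prefix of $A$. At Alice's final turn in the window---move $N$ if $N$ is odd, move $N - 1$ if $N$ is even---this bound pushes her pre-move progress up to at least $m$, so her scheduled character is $A_{a+1}$ for some $a \geq m$. When this forces her to play $A_{m+1} = \mathrm{T}$, we contradict the window being all H. The sole escape is $m = n - 1$, i.e., $A = \mathrm{H}^{n-1}\mathrm{T}$; here a direct lockstep trace shows that because $n$ is odd, Alice's $\frac{n+1}{2}$-th move lands exactly on position $n$, at which she plays $A_n = \mathrm{T}$ and wins by completing her own string, strictly before any hypothetical Bob win. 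The parity of $n$ is essential here: for even $n$, this same string $\mathrm{H}^{n-1}\mathrm{T}$ actually lets Bob win at move $n$.

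In the case $A_1 = \mathrm{T}$, the first output character is T, so $N > n$ and $N$ is even. Let $m'$ be the initial H-run of $A_2 A_3 \cdots A_n$. If $A = \mathrm{T}\mathrm{H}^{n-1}$ (i.e.\ $m' = n - 1$), a lockstep trace (again using $n$ odd) has Alice's progress reach $n$ at her $\frac{n+1}{2}$-th turn, i.e.\ move $n$, so Alice wins. If instead $A = \mathrm{T}\mathrm{H}^{n-2}\mathrm{T}$, the suffix of $G^{N-2}$ of length $n - 1$ equals $\mathrm{T}\mathrm{H}^{n-2}$ and matches $A$'s length-$(n-1)$ prefix, forcing $a_{N-2} = n - 1$, so Alice plays $A_n = \mathrm{T}$ at move $N - 1$, contradicting the window. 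For all remaining strings with $A_1 = \mathrm{T}$, I would track Alice's progress after her T-play at position $N - n$: her progress advances through H-positions of $A$, and because $A$ has a T in $A_2 \cdots A_{n-1}$, she inevitably either lands on a T-position of $A$ at some Alice-turn inside the window (playing T) or her progress drops to $0$ after a Bob H-move, after which she immediately plays $A_1 = \mathrm{T}$.

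The main obstacle is that the estimate $a_t \geq \min(m,\, t - N + n)$ in the $A_1 = \mathrm{H}$ case is only a lower bound: a matching suffix could in principle reach past $G_{N-n} = \mathrm{T}$ into earlier game history and yield $a_t$ strictly larger than $m$, so Alice's scheduled character need not be T. I would resolve this by observing that any such longer match forces a T of $A$ to align with $G_{N-n}$, which either yields $a_{N-2} = n$ (so Alice actually won at move $N - 2$, contradicting our assumption that Bob wins at $N$) or reproduces the same ``Alice plays T in the window'' contradiction at a shifted position of $A$. The analogous bookkeeping in the $A_1 = \mathrm{T}$ case, spanning all possible T-positions at which Alice's earlier T-play may have occurred, reduces to a finite verification thanks to the $4n - 4$ state-space bound of the preceding lemma.
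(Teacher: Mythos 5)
Your proposal is an outline with several steps explicitly left as plans, and the gaps it acknowledges are not actually closed, so it does not yet constitute a proof. Three specific problems. First, the residual case ``all remaining strings with $A_1=\mathrm{T}$'' is settled only by the word \emph{inevitably}: you assert that Alice's progress either lands on a T-position of $A$ inside the window or drops to $0$, but after one of Bob's H-moves her progress is recomputed as the longest prefix-suffix match, which can jump to intermediate values and can reach back past $G_{N-n}$ into earlier history; you never rule out that she threads through H-positions of $A$ for the whole window. Second, the ``main obstacle'' you name in the $A_1=\mathrm{H}$ case is real, and your proposed resolution (``reproduces the same contradiction at a shifted position'') is exactly the part that needs proof; the shifted position need not carry a T of $A$ at the index Alice is about to play. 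Third, the appeal to the $4n-4$ bound to get ``a finite verification'' does not work: that bound makes each individual game finite, but the lemma quantifies over all odd $n$ and all strings $A$, which is an infinite family; no finite check covers it.

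The paper's proof avoids all of this bookkeeping with one reduction you are missing. Taking Bob's string to be all Ts, it first observes that Bob cannot win on Alice's turn (Alice's string is not $\mathrm{T}^n$), and then that if Bob wins on his own turn, Alice must have just contributed a T to a run of $n-1$ trailing Ts, which forces Alice's string itself to contain $n-1$ consecutive Ts. That leaves only the two candidate strings $\mathrm{HT}^{n-1}$ and $\mathrm{T}^{n-1}\mathrm{H}$, and a direct trace (using the parity of $n$ only for the second one) shows Alice wins in both. Your parity observation about the index of the winning move and your ``forced wrong letter inside the window'' idea are both sound and in the same spirit, but without the reduction to $n-1$ consecutive identical letters you are left tracking Alice's progress through an unbounded case analysis that you have not completed.
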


\begin{proof}
If $n=1$, Alice always wins. Suppose $n >1$. Without loss of generality, suppose Bob has all Ts. Because Alice and Bob have different strings, Alice cannot have $n$ Ts in a row. Thus, Bob cannot win on Alice's turn. 

Suppose Bob wins on his turn. For Bob to have won, Alice must have placed a T before Bob's winning turn, meaning that the game output has $n-1$ Ts in a row. This implies that Alice's string has $n-1$ Ts in a row. We can manually check the two cases: Alice's string must either start or end with H. In both cases, she wins.
\end{proof}

Our preliminary computation shows that, disregarding the case above, Alice can force a loss. However, we were unable to prove this. The following theorem presents all the cases when we can prove that Alice can force a loss.

\begin{theorem}\label{thm:Aliceforcesloss}
Alice can force a loss in the following cases:
\begin{enumerate}
\item If $n$ is even.
\item If Bob's string starts with HT.
\item If Bob's string starts with an even number of Hs.
\item If Bob's string starts with an odd number of Hs followed by at least two Ts.
\end{enumerate}
\end{theorem}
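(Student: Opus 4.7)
The plan is to exhibit an explicit Alice string $A$ in each of the four cases and trace the deterministic game, showing that Bob's progress $b_k$ reaches $n$ before Alice's progress $a_k$ does. The unifying idea is a \emph{one-step shift}: choose $A$ so that $A_i = B_{i+1}$ for most $i$, which creates a regime where the output tracks a prefix of $B$ with Bob exactly one step ahead of Alice until Bob wins.

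For Case 1 ($n$ even) I would take $A = B_1 B_2 \cdots B_{n-1}\overline{B_n}$. Because $A$ and $B$ agree on positions $1$ through $n-1$, a short induction on $k$ shows the state after move $k$ is $(k,k,\cdot)$ with output $B_1 \cdots B_k$. At move $n$, which is Bob's since $n$ is even, Bob plays $B_n$, the output equals $B$, and $b_n = n$; meanwhile $a_n = n-1$ because $A$ and $B$ differ only in position $n$. The game lasts exactly $n$ moves.

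For Cases 3 and 4, where $n$ is odd and $B$ begins with a run of H's, I would use the shifted string $A = B_2 B_3 \cdots B_n \overline{B_n}$. In Case 3, where $B$ starts with $2k$ H's followed by a T, the opening moves quickly bring the state to a configuration with $b = a + 1$ and the output tracking $B$'s prefix; from there the relation $A_i = B_{i+1}$ maintains the one-step lead and Bob wins at move $n$. In Case 4, where $B$ starts with $2j-1$ H's followed by at least two T's, the same construction works but with a longer preamble caused by the two consecutive T's, yielding a game of length $n + (2j-1)$.

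Case 2 ($B$ starts with HT) overlaps with Case 4 exactly when $B$ starts with HTT, so the essentially new content is $B$ starting with HTH. Here the plan is to try one of a small family of candidate strings --- the shifted $A = B_2 \cdots B_n \overline{B_n}$, a doubled-first-letter variant $A = B_1 B_1 B_2 \cdots B_{n-1}$, or the constant string $A = H^n$ --- and a sub-case analysis on the structure of $B$ past position $3$ shows that at least one of these always forces a Bob win. The main obstacle throughout is verifying that Alice's progress does not jump to $n$ before Bob wins; since the game's output eventually ends in a prefix of $B$, this reduces to bounding the longest suffix of $B_1 \cdots B_k$ that matches a prefix of $A$, and the structural hypotheses on $B$ in each case (parity of initial H-run length, presence of consecutive T's) are exactly what make this bound strictly less than $n$.
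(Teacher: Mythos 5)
Your Cases 1, 3, and 4 follow essentially the paper's argument: the truncated string $B_1B_2\cdots B_{n-1}\overline{B_{n}}$ for even $n$, and the shifted string with $A_i = B_{i+1}$ for the cases where $B$ opens with a run of Hs, together with the same synchronization analysis and the same game lengths ($n$, $n$, and $n+x$ respectively). Those parts are fine.

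The gap is in Case 2. You correctly reduce it to the situation where $n$ is odd and $B$ begins with HTH, but you then only list three candidate strings and assert that ``a sub-case analysis on the structure of $B$ past position 3 shows that at least one of these always forces a Bob win.'' That assertion is the entire content of the case, and it is not routine: no single candidate works uniformly, so you must say which one to use when and actually verify it. For example, with $B = \textrm{HTHHT}$ your shifted candidate $A = \textrm{THHTH}$ yields the output THHTH and \emph{Alice} wins in $5$ moves, while $A = \textrm{HHHHH}$ yields an infinite game; only your doubled-first-letter candidate succeeds there. Conversely, for $B = \textrm{HTHTT}$ it is the shifted candidate that works. The paper supplies exactly the missing decomposition: if $B$ alternates entirely, take $A = \textrm{T}^n$; otherwise locate the position $k$ of the first repeated letter ending the alternating prefix and use a shift of $B$ by two positions when the break is HH ($k$ even) or by one position when it is TT ($k$ odd), then trace the resulting states to see Bob finish in $n$ or $n+1$ moves. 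Until you specify and verify such a decomposition, Case 2 is unproved.
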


\begin{proof}
We present a proof separately for each case.
\begin{enumerate}
\item Alice can choose a string such that everything but the last character is the same as Bob's string i.e. $A = B_1B_2B_3 \ldots B_{n-2}B_{n-1} \overline{B_{n}}$.
When this occurs, Bob wins because the first $n-1$ turns are synchronized, and for the $n$-th turn, Bob chooses his last outcome. 
\item If Bob's string starts with HT, then the start of his string is alternating. As a result, the string must either cease alternating at some point or continue alternating until the string ends, giving us the following two cases:

Case 1: Bob's entire string is alternating. Alice can choose a string with all Ts to guarantee that Bob wins.

Case 2: The prefix of Bob's string which alternates ends at some point. For the alternating to stop, two of the same letter must conclude the alternating prefix, resulting in two more cases. Suppose that $k$ is the position of the second repeated letter right after the alternating prefix in Bob's string. 
\begin{enumerate}
    \item Subcase 1: The string which alternates ceases with two Hs. In this case, $k$ must be even. Therefore, $B=$ HTHTHT...THH$B_{k+1}B_{k+2}...B_n$. Note that there are $\dfrac{k}{2}-1$ pairs of HTs in the alternating string. If Alice copies Bob's string but removes the first two tosses and adds two of her own at the end, then $A=$ HTHT...THH$B_{k+1}B_{k+2}...B_{n}A_{n-1}A_{n}$, she can guarantee that Bob wins. This is because she has $\dfrac{k}{2} - 2$ pairs of HTs, which is one less than Bob's. The game output starts with $\dfrac{k}{2}-2$ pairs of HTs, so the state of the game is $(k-4,k-4,B)$. Alice then chooses an H, and Bob chooses a T. At this point, the state of the game becomes $(k-4,k-2,B)$. After this, moves are synchronized and Bob stays ahead by 2 tosses until he wins. Bob wins in $n$ moves.
    \item Subcase 2: The string which alternates ends with two Ts. In this case, $k$ must be odd.  We can represent Bob's string as HTHT...HTT$B_{k+1}B_{k+2}...B_n$. If Alice copies Bob's string but removes the first character and adds another character of her own at the end, meaning that her string would be THT...HTT$B_{k+1}B_{k+2}...B_nA_{n}$, she can guarantee that Bob wins. In the beginning, the game output alternates and Alice is ahead by 1 toss. On move $k-1$ Bob chooses H, when Alice needs T. Now Alice is set back by 2 tosses, and the game continues synchronized until Bob wins. Bob wins in $n+1$ moves.
\end{enumerate}

\item When Bob's string starts with an even number of Hs, Alice can cut off the first character of Bob's string, so $A = B_2B_3B_4 \ldots B_{n-1}B_nA_{n}$. In the beginning, the game is synchronized. Then Bob chooses H when Alice needs T, setting Alice back by one toss. The game proceeds synchronized until Bob wins on turn $n$.
\item Suppose $n$ is odd. Let us suppose that the first $x$ terms of $B$ are all Hs and $B_{x+1} = B_{x+2} = \textrm{T}$, where $x$ is odd and greater than $1$. In particular, we have $x+2 \leq n$.

Now Alice chooses her string so that her prefix of size $n-1$ equals Bob's suffix of the same length. That is $A_i = B_{i+1}$, for $i < n$, so $A = B_2B_3B_4 \ldots B_{n-1}B_nA_{n}$.

For the first $x-1$ turns, both $A$ and $B$ are synchronized. As $x$ is odd, the $x$-th turn must belong to Alice, who must place $A_{x} = B_{x+1} = \overline{B_{1}} =\textrm{T}$. Bob dissents and is forced to restart his string by placing $B_{1} = \textrm{H}$ again. However, $A_{x+1} = B_{x+2} = \overline{B_{1}} = \textrm{T}$, so Alice is forced to restart as well. Thus, they both restart the whole game on Bob's turn. After this restart, Bob is able to get the $x$-th term and places $B_{x}$. From here, the game is synchronized and Bob is one character ahead of Alice. Bob wins on move $x+n$.
\end{enumerate}
\end{proof}

\subsubsection{Bob forces a loss for himself}

When there is an odd number of tosses, Bob can make a string such that everything but the last toss is the same as Alice's string. When this happens, Alice wins because she always gets the last coin.

Bob cannot always force a loss when there is an even number of coins. 

\begin{lemma}
If $n$ is even and all characters in Alice's string are the same, Bob cannot force a loss.
\end{lemma}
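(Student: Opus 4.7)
The plan is to assume for contradiction that some $B \neq A$ (where WLOG $A = H^n$) lets Alice win at some turn $m$, and derive a contradiction from the parity structure of the game output.

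Since $A = H^n$, Alice always plays $H$, so odd positions of the game output $G$ are all $H$. I claim $m$ must be even: because $n$ is even, $m - n$ has the same parity as $m$; if $m$ were odd then $G_{m-n}$ would be Alice's $H$, making positions $m - n, \ldots, m - 1$ a run of $n$ consecutive $H$'s, so Alice's string already appeared at turn $m - 1 < m$, contradicting the minimality of $m$. If $m = n$, the output is $H^n$, forcing every Bob play to be $H$; a short induction on Bob's turns then forces $B_1 = \cdots = B_n = H$, i.e., $B = A$, contradiction. So $m > n$, and by the same parity argument $G_{m-n}$ must be $T$, played by Bob. Let $c^*$ denote Bob's progress immediately after this $T$ play; then $c^* \geq 1$, $B_{c^*} = T$, and $B_1 \cdots B_{c^*}$ matches the output ending at position $m - n$.

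The key observation is that the output's last $n$ characters are all $H$, so positions $m - n - c^* + 1$ through $m - c^*$ of $G$ spell out $B_1 \cdots B_{c^*} H^{n - c^*}$. If $B_{c^* + 1} \cdots B_n$ were all $H$, this would equal $B$, so Bob's string would appear at turn $m - c^* < m$, giving Bob a win before Alice, contradiction. Hence $B$ must contain a $T$ somewhere in positions $c^* + 1, \ldots, n$. To finish, I would track Bob's progress during the final $n$ turns (each of which appends $H$) via the KMP transition function $\delta$ of $B$: setting $s_k = \delta^k(c^*, H)$, one has $s_0 = c^*$ and $s_n = L_B$ (the length of $B$'s leading $H$-run, since the tail $H^n$ matches only $B_1 \cdots B_{L_B}$). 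For Alice to win, each of $s_1, s_3, \ldots, s_{n-1}$ must be an $H$-state so that Bob plays $H$ on his $n/2$ streak turns, and no $s_k$ may equal $n$.

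The main obstacle is a careful KMP analysis showing these requirements are incompatible. In the simplest case, where every $s_k$ grows by $+1$, the trajectory reaches $c^* + n > n - 1$, so some $s_k = n$ and Bob wins during the streak. In general, failure drops must occur each time $s_k$ reaches the next $T$-position of $B$; a case analysis distinguishing the parity of $B$'s $T$-positions, combined with the fact that $B_1 \cdots B_{c^*}$ is already realized in the output near position $m - n$, should show that either cumulative growth still forces $s_k = n$ (Bob wins in the streak) or one of the drops lands on a $T$-state at an odd index $2k - 1$ (so Bob plays $T$ on that streak turn, breaking the $H$-streak Alice needs). I expect the trickiest subcase to be when $B$ has $T$'s at both even and odd indices of its length-$n$ string, in which case $B$ cannot appear anywhere in $G$ by a separate parity argument; but then a parallel parity analysis of Bob's progress values should also rule out the required $H$-streak.
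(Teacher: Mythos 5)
Your setup is sound and more careful than the paper's in several places: the parity argument showing that the winning turn $m$ must be even (else $G_{m-n}$ is one of Alice's H's and her string already appeared at turn $m-1$), the induction disposing of $m=n$, and the identification of the state $c^*$ with $B_{c^*}=\mathrm{T}$ after Bob's forced T at position $m-n$ are all correct. But the proof is not finished. The entire difficulty of the lemma sits in the case $m>n$, $m$ even, and there your argument stops at the point where it matters: you state that a ``careful KMP analysis'' of the trajectory $s_k=\delta^k(c^*,\mathrm{H})$ \emph{should} show that the requirements (every $s_{2k-1}$ an H-state, no $s_k=n$) are incompatible, and you explicitly flag the subcase where $B$ has T's at both parities as something you \emph{expect} to handle by a separate argument. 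That case analysis is the theorem; without it you have only shown that at least one ``drop'' must occur in the final $n$-turn streak (since otherwise $s_n=c^*+n>n$), which by itself yields no contradiction. As written, the proposal is a plausible program, not a proof.

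For comparison, the paper avoids this machinery entirely and argues at the level of strings rather than automaton trajectories: writing $A$ as all T's, it first rules out Alice winning on Bob's turn, and then observes that if Alice wins on her own turn, Bob must have just placed a T completing $n-1$ consecutive T's in the output, which forces Bob's string to contain $n-1$ identical letters in a row. Since $B\neq A$, that leaves exactly two candidates, $\mathrm{H}\mathrm{T}^{n-1}$ and $\mathrm{T}^{n-1}\mathrm{H}$, and a direct simulation shows Bob wins with each (the second using the evenness of $n$). If you want to salvage your approach, the observation likely to close your gap is that all of Alice's moves occupy the odd positions of $G$ and are H, so every T in $G$ sits at an even position; hence whenever Bob's progress equals $b_k$ after turn $k$, every T among $B_1\cdots B_{b_k}$ lies at an index congruent to $b_k-k \pmod 2$. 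That single-parity constraint on the T-positions of reachable prefixes of $B$ is what your sketch is circling around, but it needs to be stated and used, not anticipated.
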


\begin{proof}
Without loss of generality, suppose Alice has all Ts. As Bob has a different string, he does not have $n$ Ts in a row. Thus, Alice cannot win on Bob's turn. Suppose Alice wins on her turn. That means the previous turn was Bob's and he placed a T, meaning the game output has $n-1$ Ts in a row. This implies that Bob's string has $n-1$ Ts in a row. We can manually check the two cases: 1) Bob's string is HTTT$\ldots$ or 2)  $B =$ TTT$\ldots$ TTH. 
\begin{enumerate}
    \item In this case, Alice first places a T, then Bob places an H. After this, Bob is one move ahead of Alice and their moves are synchronized for the rest of the game. Thus, Bob wins in $n+1$ moves.
    \item In this case, Alice and Bob are synchronized for the first $n-1$ moves. Since $n$ is even, Bob has the $n$-th turn and places an H, securing himself a victory. 
\end{enumerate}

In both cases, Bob wins.
\end{proof}

We wrote a program and found some strings for which Bob cannot force a loss. We looked only at strings that start with H. We excluded the strings where all the characters are the same. We got one case for length 4: HHTT, three cases for length 6: HHTTTT, HHTHTT, HHHHTT, and 9 cases for length 8: HHTTTTTT, HHTTTHTT, HHTTHTTT, HHTHTTTT, HHTHTHTT, HHTHHTTT, HHHHTTTT, HHHHTHTT, HHHHHHTT.

One can notice that each string starts with an even number of Hs. We can prove that, if this is not the case, Bob can force a loss.

\begin{lemma}
If Alice's string starts with an odd number of Hs, Bob can force a loss for himself.
\end{lemma}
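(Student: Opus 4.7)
My plan is to imitate the shift strategy that appears in the symmetric part of Theorem~\ref{thm:Aliceforcesloss}(4), suitably adapted to the fact that Alice moves first. Bob will pick $B$ to be essentially a one-position left shift of $A$, and I will then prove that the game output equals the prefix of $A$ at every stage, so Alice's whole string is the game output at exactly turn $n$.

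Let $x$ denote the number of leading H's in $A$; by hypothesis $x$ is odd. Define $B_i = A_{i+1}$ for $1 \le i \le n-1$, and choose $B_n$ so that $B \neq A$. Such a choice always exists: if $A$ is non-constant then $B_1 \cdots B_{n-1} = A_2 \cdots A_n \neq A_1 \cdots A_{n-1}$ already, so any $B_n$ works; if $A$ is constant then $A = \mathrm{H}^n$ with $x = n$ odd, and setting $B_n = \mathrm{T}$ gives $B = \mathrm{H}^{n-1}\mathrm{T} \neq A$.

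The core of the proof is the invariant: after $k$ turns the game output $G^k$ equals $A_1 A_2 \cdots A_k$. I would prove this by induction on $k$. The base $k = 0$ is trivial. For the inductive step, I split on whose turn $k+1$ is. When Alice moves (so $k$ is even), her progress is $a_k = k$ because $G^k$ is the length-$k$ prefix of $A$, so she places $A_{k+1}$ and the prefix extends. When Bob moves (so $k$ is odd), the identity $B_1 \cdots B_{k-1} = A_2 \cdots A_k$, whose right side is a suffix of $G^k$, gives $b_k \ge k - 1$; furthermore $b_k = k$ holds iff $A_1 \cdots A_k = A_2 \cdots A_{k+1}$, equivalently iff $k+1 \le x$. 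So if $k \ge x$ then $b_k = k - 1$ and Bob places $B_k = A_{k+1}$, extending the prefix. If $k \le x - 1$ then $b_k = k$ and Bob places $B_{k+1} = A_{k+2}$; here the parity hypothesis is essential, because $k$ odd together with $x$ odd forces $k \le x - 2$, hence $k + 2 \le x$ and $A_{k+2} = \mathrm{H} = A_{k+1}$, so the prefix again extends correctly.

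Given the invariant, at turn $n$ the output is exactly $A$ and Alice wins, while before turn $n$ the output is shorter than $n$ and neither player's length-$n$ string can appear. The main obstacle I anticipate is the Bob-turn portion of the inductive step, where one must interlock three ingredients: the lower bound $b_k \ge k - 1$, the exact characterization of when $b_k = k$, and the parity of $x$. Were $x$ even, at $k = x - 1$ the index $k$ would be odd, $b_k = k$ would hold, and Bob would place $A_{x+1} = \mathrm{T}$ while the invariant demands $A_x = \mathrm{H}$, breaking the induction; the oddness of $x$ is precisely what makes $k = x - 1$ fall on Alice's turn rather than Bob's, letting her safely complete the initial H-block.
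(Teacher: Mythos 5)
Your proof is correct and uses the same strategy as the paper: Bob takes the one-position left shift of Alice's string, $B = A_2A_3\cdots A_nB_n$, and the game output is then the growing prefix of $A$, so Alice wins on turn $n$. Your version just makes the paper's terse argument fully rigorous, via the explicit invariant $G^k = A_1\cdots A_k$ and the careful case analysis of $b_k \in \{k-1, k\}$ during the initial run of Hs, where the parity of $x$ is used.
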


\begin{proof}
Bob chooses a string such that his prefix of length $n-1$ is the same as Alice's suffix, so $B = A_2A_3A_4...A_nB_n$. Suppose Alice has a run of $k$ Hs in the beginning, where $k$ is odd. Then, the game output starts with $k$ Hs. At this point, the state of the game is $(k,k-1,B)$. After this, all the moves are synchronized and Alice is one character ahead. Alice wins in $n$ moves.
\end{proof}

\section{Cases where we can determine who wins}\label{sec:whowins}

In this section, we study several cases in which we can determine the winner. We start with cases that end in an infinite game. Next, we examine pairs of strings with a large overlap. Finally, we discuss particular cases when one of the strings either alternates or consists of the same letter.

\subsection{An infinite game}

We denote the length of the largest substring of consecutive Hs or Ts in Alice's prefix of length $p$ as $h_A(p)$ or $t_A(p)$ correspondingly. We use a similar notation for Bob.

\begin{lemma}\label{lemma:longestrun}
If $h_A(n)+2 \leq h_B(n)$ or $t_A(n)+2 \leq t_B(n)$, then Bob cannot win. The same is true if we interchange Alice and Bob.
\end{lemma}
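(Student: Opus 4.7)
The plan is to argue by contradiction. Assume $h_A(n)+2 \le h_B(n)$ and that Bob wins. Then Bob's string appears as the final $n$ characters of the game output, so the output contains a block of $h_B(n)$ consecutive Hs; extending it gives a maximal run of Hs of length $M \ge h_B(n) \ge h_A(n)+2$, occupying output positions $\ell+1,\ldots,\ell+M$. The goal is to identify a specific move of Alice inside this run at which she could not have written H, yielding the contradiction.

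Since Alice and Bob alternate, among any two consecutive output positions exactly one belongs to Alice. Because $M \ge h_A(n)+2$, both positions $\ell+h_A(n)+1$ and $\ell+h_A(n)+2$ lie inside the run, so Alice plays at position $\ell+j^\ast$ for some $j^\ast \in \{h_A(n)+1,\,h_A(n)+2\}$ and writes H. Let $a$ denote Alice's progress immediately before that move; the flip rule gives $A_{a+1}=\textrm{H}$, and by the observation in Section~\ref{sec:defs} her own progress advances by exactly $1$ on her turn.

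I would then split on the size of $a$ relative to $j^\ast-1$, the number of Hs already visible from the run when she moves. If $a \le j^\ast-1$, the length-$a$ suffix of $G^{\ell+j^\ast-1}$ lies entirely inside the run, so $A_1\cdots A_a$ are all H. In the subcase $a \le j^\ast-2$, the suffix of length $a+1$ is still all H, so maximality of $a$ forces $A_{a+1}=\textrm{T}$, contradicting $A_{a+1}=\textrm{H}$; in the subcase $a=j^\ast-1$, combining $A_1\cdots A_a$ all H with $A_{a+1}=\textrm{H}$ yields $A_1\cdots A_{j^\ast}$ all H, producing a block of $j^\ast \ge h_A(n)+1$ consecutive Hs in $A$ and contradicting the definition of $h_A(n)$. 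If instead $a \ge j^\ast$ (which requires $\ell \ge 1$ so that there is room for the match), the matched prefix $A_1\cdots A_a$ reaches past the start of the run; maximality of the run gives $G_\ell=\textrm{T}$, so the characters $A_{a-j^\ast+2},\ldots,A_a$ match the $j^\ast-1$ Hs of the run, and appending the freshly written $A_{a+1}=\textrm{H}$ yields a block $A_{a-j^\ast+2},\ldots,A_{a+1}$ of $j^\ast \ge h_A(n)+1$ consecutive Hs in $A$, again a contradiction.

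All cases fail, so Bob cannot win. The statement about T-runs is the same argument with H and T interchanged, and the interchange of Alice and Bob is immediate since the argument is symmetric in the two players. The main subtlety is the bookkeeping in the case $a \ge j^\ast$: one must verify that $G_\ell$ exists (forcing $\ell \ge 1$) and equals T (by maximality of the chosen run), so that the matched indices really do line up with a contiguous H-block of length $j^\ast$ in Alice's string, which is what produces the forbidden run exceeding $h_A(n)$.
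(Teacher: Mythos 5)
Your proof is correct and rests on the same key idea as the paper's: Alice can never be the player who extends a terminal run of Hs of length $h_A(n)$ or more, since the flip rule would then force a run of more than $h_A(n)$ Hs inside her own string. The paper states this interruption property in one sentence without justification, whereas you prove it carefully via the case analysis on Alice's progress $a$ relative to the run; the content is the same, just fully worked out.
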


\begin{proof}
By symmetry, it is enough to prove this lemma for the case $h_A(n)+2 \leq h_B(n)$.
If Alice sees a game output with $h_A(n)$ or $h_A(n)+1$ occurrences of letter H at the end, then she will interrupt it by placing a T. Thus, Bob can never finish his largest run of Hs.
\end{proof}

The above lemma allows us to describe some infinite games.

\begin{theorem}\label{thm:infinitegame}
If there exists $p$ such that
\[h_A(p) +1 < h_B(p) \quad \textrm{ and } \quad t_B(p) + 1 < t_A(p)\]
or
\[h_B(p) +1 < h_A(p) \quad \textrm{ and } \quad t_A(p) + 1 < t_B(p),\]
then the game is infinite.
\end{theorem}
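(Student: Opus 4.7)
My plan is to reduce to Lemma~\ref{lemma:longestrun} by considering a shortened variant of the game. Let $A' = A_1 A_2 \cdots A_p$ and $B' = B_1 B_2 \cdots B_p$ be the length-$p$ prefixes, and let the \emph{prefix game} denote the No-Flippancy game played with strings $A'$ and $B'$. Observe that $h_{A'}(p) = h_A(p)$ and similarly for the three analogous quantities; in particular the strict inequalities in the hypothesis force $A' \ne B'$, so the prefix game is well-defined.

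Assume the first disjunct, $h_A(p) + 1 < h_B(p)$ and $t_B(p) + 1 < t_A(p)$; the second is handled by swapping Alice and Bob throughout. Since these quantities are integers, we equivalently have $h_{A'}(p) + 2 \le h_{B'}(p)$ and $t_{B'}(p) + 2 \le t_{A'}(p)$. Applying Lemma~\ref{lemma:longestrun} to the prefix game twice, the first inequality yields that Bob cannot win that game, and the second (via the symmetric half of the lemma, interchanging the two players) yields that Alice cannot win it either. Hence the prefix game must be infinite.

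To transport this conclusion back to the original game, I would argue by induction on the turn number $k$ that in the original game both $a_k < p$ and $b_k < p$, and that the output and state triple coincide with those of the prefix game. For the inductive step, the active player places the $(p_k + 1)$-th character of their string, which agrees between the full string and its length-$p$ prefix because $p_k + 1 \le p$, so the outputs still match. The active player's new progress is $p_k + 1 \le p$, but it cannot equal $p$, since that would cause the prefix game to terminate. For the other player, one checks that progress grows by at most one per turn (appending a single output character cannot lengthen a suffix-prefix match by more than one), so their new progress is at most $p$; by the same ``would-terminate-the-prefix-game'' argument it cannot equal $p$, so it too is strictly below $p$. Thus both progresses stay below $p \le n$ forever, so neither reaches $n$, and the original game is infinite.

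The one subtlety deserving care is the ``progress grows by at most one per turn'' fact, together with the observation that a new progress of exactly $p$ for either player would correspond to a prefix-game win; once these are in hand, the reduction to Lemma~\ref{lemma:longestrun} is routine.
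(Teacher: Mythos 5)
Your proof is correct and follows the same route as the paper: apply Lemma~\ref{lemma:longestrun} to the length-$p$ prefixes to conclude neither player can win the prefix game, hence neither progress ever reaches $p$. The paper's own proof is a single sentence that leaves the reduction implicit; your lockstep/induction argument showing the original game coincides with the prefix game while both progresses stay below $p$ is exactly the detail needed to make that one-liner rigorous.
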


\begin{proof}
Applying Lemma~\ref{lemma:longestrun}, to $p$, neither Alice nor Bob can achieve their prefix, and thus neither can win. Thus, the game must be infinite.
\end{proof}

\begin{corollary}\label{cor:HHvsTT}
If Alice's string starts with HH and Bob's string starts with TT or vice versa, then the game is infinite.
\end{corollary}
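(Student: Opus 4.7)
The plan is to deduce this corollary directly from Theorem~\ref{thm:infinitegame} by choosing $p = 2$. Assume without loss of generality that Alice's string starts with HH and Bob's starts with TT (the other direction is identical by swapping roles). Since $n \geq 2$ for the hypothesis to make sense, the length-$2$ prefix of each string is well defined.

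For $p = 2$, I would read off the four quantities directly from the prefixes: Alice's prefix HH yields $h_A(2) = 2$ and $t_A(2) = 0$, while Bob's prefix TT yields $h_B(2) = 0$ and $t_B(2) = 2$. Then I would check the second disjunct of the hypothesis of Theorem~\ref{thm:infinitegame}: we have $h_B(2) + 1 = 1 < 2 = h_A(2)$ and $t_A(2) + 1 = 1 < 2 = t_B(2)$, so both inequalities hold. Applying Theorem~\ref{thm:infinitegame} with this value of $p$ gives the conclusion that the game is infinite. The symmetric case (Alice starts with TT and Bob with HH) follows verbatim, using the first disjunct instead.

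There is essentially no obstacle here; the corollary is a one-step instantiation of the theorem, and the only thing to verify is the arithmetic on the prefixes of length two. In particular, no knowledge of the internal dynamics of the game is needed beyond what Theorem~\ref{thm:infinitegame} already encapsulates.
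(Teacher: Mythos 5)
Your proof is correct and is essentially identical to the paper's: the paper likewise observes that the second condition of Theorem~\ref{thm:infinitegame} holds with $p=2$. You merely spell out the arithmetic $h_A(2)=2$, $h_B(2)=0$, $t_A(2)=0$, $t_B(2)=2$ that the paper leaves implicit.
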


\begin{proof}
The second condition of Theorem~\ref{thm:infinitegame} is satisfied for $p=2$.
\end{proof}

\subsection{Cases with large overlaps}

Here, we discuss the result of the game when Alice's and Bob's strings have a common substring with a length of at least $n-2$.

\begin{lemma}\label{lemma:largecommonprefix}
Suppose Alice and Bob have the same string except for the last letter. If $n$ is even Bob wins, and if $n$ is odd Alice wins.
\end{lemma}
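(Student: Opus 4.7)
The plan is to show that the first $n-1$ turns are perfectly synchronized, so the game output after turn $k$ is simply $A_1 A_2 \cdots A_k$ and the state after turn $n-1$ is $(n-1, n-1, P)$ for some player $P$. Then the parity of $n$ determines who $P$ is, and that player completes their own string on turn $n$ and wins.

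First, I would prove by induction on $k$ (for $0 \le k \le n-1$) that after turn $k$ the game output equals $A_1 \cdots A_k$ and $a_k = b_k = k$. The base case $k=0$ is trivial. For the inductive step, both players' strings share the prefix $A_1 \cdots A_{n-1}$, and $G^{k-1} = A_1 \cdots A_{k-1}$ lies entirely within that shared prefix; hence the longest suffix of $G^{k-1}$ matching either player's prefix is $G^{k-1}$ itself, of length $k-1$. In particular, the active player on turn $k$ has progress $k-1 < n$, so they must place the $k$-th character of their own string, which is $A_k$, yielding $G^k = A_1 \cdots A_k$. The same suffix-prefix argument applied to $G^k$ shows that after the turn both players have progress $k$.

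During this phase the game output has length at most $n-1 < n$, so no length-$n$ string can have occurred and the game has not ended prematurely. Thus turn $n$ really does happen, in state $(n-1, n-1, P)$. Since Alice plays the odd-indexed turns and Bob the even-indexed ones, $P$ is Alice when $n$ is odd and Bob when $n$ is even. In either case the active player has progress $n-1$, so they place the $n$-th character of their own string, completing it and winning the game. This gives Alice the win when $n$ is odd and Bob the win when $n$ is even.

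The only subtlety, and arguably the main obstacle, is confirming that the inactive player's progress really does track the active player's in lockstep during the synchronized phase (rather than, say, lagging when some internal suffix-prefix match shortens). This follows immediately, however, from the observation that the entire game output is contained in both players' shared length-$(n-1)$ prefix, so the longest suffix-prefix match is always the full current output.
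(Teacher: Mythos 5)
Your proof is correct and follows the same route as the paper's: the first $n-1$ tosses are synchronized because the output stays inside the shared length-$(n-1)$ prefix, and then whoever owns turn $n$ (Bob for $n$ even, Alice for $n$ odd) completes their string. You simply spell out the induction and the suffix-prefix bookkeeping that the paper leaves implicit.
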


\begin{proof}
We have $A = B_1B_2B_3 \ldots B_{n-2}B_{n-1}\overline{B_{n}}$. The first $n-1$ tosses are synchronized since they coincide with the common prefix of Alice and Bob. After $n-1$ turns, the person whose turn it is finishes his/her string and wins. If $n$ is even, this person will be Bob, and conversely, if $n$ is odd, it will be Alice.
\end{proof}

Now, we consider cases where one player's suffix equals the other player's prefix both of length at least $n-2$.

\begin{lemma}
If Alice's string starts with HT, Bob's with HH, and Bob's suffix of length $n-1$ is the same as Alice's prefix of length $n-1$, then Bob wins.
\end{lemma}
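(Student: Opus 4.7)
The plan is to show that the game output after turn $k$ equals $B_1 B_2\cdots B_k$ for every $1\le k\le n$, so that Bob's string appears on turn $n$ and Bob wins. The hypothesis ``Bob's suffix of length $n-1$ equals Alice's prefix of length $n-1$'' translates to $B_{i+1}=A_i$ for $1\le i\le n-1$; combined with Bob's start HH and Alice's start HT, this forces $B=\textrm{HH}A_2A_3\cdots A_{n-1}$ versus $A=\textrm{HT}A_3\cdots A_n$. So $B$ is essentially $A$ with an extra H prepended and its last letter chopped off, and after the initial HH the two strings will march forward in lockstep with Bob always one position ahead.

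First I would dispatch the first two turns by hand. Alice plays $A_1=\textrm{H}$, bringing both progresses to $1$ and producing state $(1,1,B)$; then Bob plays $B_2=\textrm{H}$, and because the output HH cannot match the length-$2$ prefix HT of $A$, Alice's progress stays at $1$ while Bob's becomes $2$, giving state $(1,2,A)$. Next I would prove by induction on $k$ that for every $2\le k\le n$, the output after turn $k$ is $B_1\cdots B_k$ and the state is $(k-1,k,P)$ for the appropriate $P$. The inductive step turns on two observations. First, from state $(k-1,k,P)$ the player whose turn it is wants to place the letter $A_k=B_{k+1}$ (Alice her $k$-th character, Bob his $(k+1)$-th), so the moves are synchronized and the output extends to $B_1\cdots B_{k+1}$. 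Second, this new output gives Bob progress $k+1$ for free; Alice's new progress is at least $k$, witnessed by the suffix $B_2\cdots B_{k+1}=A_1\cdots A_k$, and cannot be $k+1$ because the full length-$(k+1)$ output begins HH while $A$ begins HT.

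At $k=n$ Bob's progress reaches $n$ and his string has appeared, so Bob wins in exactly $n$ moves; Alice's progress at that moment is only $n-1$, so she did not sneak in a win earlier. The only delicate piece of bookkeeping is the upper bound $a_{k+1}\le k$ inside the inductive step, and this is precisely where the HT-versus-HH asymmetry between Alice and Bob is indispensable: any longer suffix-prefix match for Alice would force the output's leading HH to equal a prefix of $A$, contradicting $A_2=\textrm{T}$. Beyond that check, the argument is routine.
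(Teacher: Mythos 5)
Your proof is correct and follows essentially the same route as the paper: the first two turns produce the output HH, after which every move is synchronized with Bob one character ahead (since $B_{i+1}=A_i$), so Bob wins on move $n$. You simply make explicit, via induction on the state $(k-1,k,P)$, the bookkeeping that the paper summarizes in one sentence.
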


\begin{proof}
We have $B =$ HHT$A_3A_4A_5 \ldots A_{n-2}A_{n-1}$. In this case, Alice chooses H on her first turn. Then, Bob chooses another H. From then on, the game is synchronized with Bob being one character ahead. Thus, Bob wins.
\end{proof}

\begin{lemma}
If Alice's string starts with HH, Bob's with HT, and Bob's suffix of length $n-2$ is the same as Alice's prefix of length $n-2$, then Bob wins.
\end{lemma}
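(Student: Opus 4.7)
The hypotheses give $A_1 = A_2 = \text{H}$, $B_1 = \text{H}$, $B_2 = \text{T}$, and $B_{i+2} = A_i$ for $1 \le i \le n-2$; in particular $B_3 = B_4 = \text{H}$, so $B$ begins $\text{HTHH}$ and from position $3$ onward is $A$ shifted two places to the right. My plan is to show that after a brief warm-up the state settles into a ``synchronized'' pattern in which both players pick the same character each turn while Bob stays exactly two positions ahead of Alice, so that Bob completes his string first on move $n$.

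I would begin with a direct simulation of the first four turns. The small cases $n = 2$ and $n = 3$ are dispatched by inspection (Bob wins on moves $2$ and $3$ respectively). Assuming $n \ge 4$: move $1$ puts $\text{H}$ into the output; move $2$ (Bob) adds $\text{T}$, which collapses Alice's progress to $0$ because $A$ begins $\text{HH}$, yielding state $(0,2,A)$; move $3$ (Alice) restarts with $\text{H}$, and since $\text{HTH}$ is exactly Bob's length-$3$ prefix the state becomes $(1,3,B)$; move $4$ (Bob) extends by $B_4 = \text{H}$, giving output $\text{HTHH}$ and state $(2,4,A)$.

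The main engine is an induction on $k$ asserting that for $4 \le k \le n$ the output after the $k$-th toss equals $B_1 B_2 \cdots B_k$ and the state is $(k-2, k, P)$ for the appropriate player $P$. Given this, the character added on move $k+1$ is the same whether Alice or Bob is to move: Alice would choose $A_{(k-2)+1} = A_{k-1}$, while Bob would choose $B_{k+1}$, which equals $A_{k-1}$ by the hypothesis on $B$. So the output becomes $B_1 \cdots B_{k+1}$, giving $b_{k+1} = k+1$; when $k+1 = n$ this is Bob's winning move.

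The delicate step I expect to need care is verifying $a_{k+1} = k-1$ rather than something larger. Here I would exploit that the output begins $\text{HT}$ while $A$ begins $\text{HH}$: any suffix of the output of length at least $k$ contains that initial $\text{T}$ in its second position and therefore cannot match a prefix of $A$, whereas the suffix of length $k-1$ is literally $A_1 \cdots A_{k-1}$. This closes the induction and yields $b_n = n$ with $a_n = n - 2 < n$, so Bob wins on move $n$.
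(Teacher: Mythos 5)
Your proof is correct and follows essentially the same route as the paper: after the forced restart at move 3, the game synchronizes with Bob exactly two characters ahead, so he wins on move $n$. You simply make explicit, via induction, the synchronization claim and the verification that Alice's progress stays at $k-2$ (the paper asserts these without detail), with only a trivial imprecision in describing where the blocking T sits inside the length-$k$ suffix.
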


\begin{proof}
We have $B =$ HTHH$A_3A_4A_5 \ldots A_{n-3}A_{n-2}$. In this case, the game output starts as HT. Alice then has to restart at the third toss of the game, while Bob does not have to restart. From here, the game is synchronized, and Bob is two letters ahead. Therefore, Bob wins.
\end{proof}

\subsection{Particular strings}

Now, we look at cases where Alice or Bob have either a string of all the same letters or an alternating string. We completely resolve the case when one player has a string of all the same letters.

\begin{lemma}
If Alice's string consists of solely Hs, then the only way for her to win is if Bob has HHH$\ldots$HHT, and $n$ is odd. Otherwise, Bob wins if and only if he has Hs in either all even or all odd spots i.e. $B =$ H$B_2$H$B_4$H$B_6 \ldots$ or $B=B_1$H$B_3$H$B_5$H $\ldots$. In all other cases, the game is infinite.
\end{lemma}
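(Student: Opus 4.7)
The approach exploits the fact that Alice's string is $H^n$, so she places an $\textrm{H}$ on every turn and every odd-indexed position of the output is an $\textrm{H}$. This immediately yields a necessary condition for Bob to win: if $B$ appears in the output starting at position $p$, the entries of $B$ at indices landing on odd output positions must be $\textrm{H}$, forcing $B$ to have $\textrm{H}$ at all odd indices (when $p$ is odd) or at all even indices (when $p$ is even). The proof then splits into three cases according to this parity structure.

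In the ``$\textrm{H}$ at all odd'' case, a short induction shows $b_k = k$ for every $k \le n$: Alice's automatic $\textrm{H}$ on odd turns matches $B_k = \textrm{H}$, and Bob's placement on even turns is $B_k$ by the flip rule, so the output through turn $k$ equals $B_1 \cdots B_k$. Hence Bob wins at turn $n$, and Alice does not pre-empt because the output through turn $n$ equals $B \ne H^n$. In the ``$\textrm{H}$ at all even but not all odd'' case, let $k$ be the smallest odd index with $B_k = \textrm{T}$. If $k = n$, then $B = H^{n-1}\textrm{T}$; since $k$ is odd so is $n$; the game produces $H^{n-1}$ through turn $n-1$ just as before, and on turn $n$ (which is Alice's) she plays $\textrm{H}$ and wins. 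If $k < n$, the output equals $H^k$ after turn $k$, Bob plays $\textrm{T}$ on turn $k+1$, and from then on $b$ advances by exactly one each turn because the growing output suffix continues to match a longer prefix of $B$; hence Bob wins at turn $n+1$. Alice does not win in this sub-case because the lone $\textrm{T}$ at output position $k+1$ caps any run of $\textrm{H}$s at length $\max(k, n-k) < n$.

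The delicate case is when $B$ has $\textrm{T}$ at some odd index and $\textrm{T}$ at some even index. Bob cannot win by the necessary condition, so it suffices to show Alice cannot win either. Suppose for contradiction she does, with the winning $H^n$-run starting at position $p$. Since Alice plays $\textrm{H}$ at every odd position, the run cannot extend back through an Alice position, so $p$ is odd. For $p = 1$, the same kind of induction shows $B_1 = \cdots = B_{n-1} = \textrm{H}$, forcing $B = H^{n-1}\textrm{T}$ with $n$ odd; but this $B$ lies in the previous case, contradicting ``Neither''. For $p > 1$, one has $G_{p-1} = \textrm{T}$ placed by Bob on turn $p-1$, and during the run every Bob turn adds at least $1$ to $b$; careful analysis shows that either Bob's progress reaches $n$ before turn $p+n-1$ (so Bob wins first, contradicting Alice's win), or Bob's progress advances by exactly one on every turn in the run, and the resulting constraint on $B$ pins $B$ to $H^{n-1}\textrm{T}$ or $\textrm{T}H^{n-1}$, both of which sit in one of the first two cases and again contradict ``Neither''. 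Either way Alice cannot win, so by the $4n-4$ bound of Section~\ref{sec:ig} the game is infinite.

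The main obstacle is this $p > 1$ sub-case: one has to check that the possible ``hidden'' longer suffix matches of the output with a prefix of $B$ either make Bob win earlier or do not arise in the ``Neither'' regime. Once that is in hand, the remaining work is careful but routine bookkeeping through the three cases.
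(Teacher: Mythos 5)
Your overall decomposition mirrors the paper's: since Alice always plays H, every odd output position is H, so Bob can win only if his string has H at all odd or at all even indices; your treatment of the ``all odd'' and ``all even'' cases, including the exception $B=\textrm{H}^{n-1}\textrm{T}$ with $n$ odd where Alice wins, is correct and essentially the paper's argument. The genuine gap is in the remaining direction: showing Alice cannot win when $B$ has a T at some odd index \emph{and} at some even index, which is precisely what you need to conclude the game is infinite there. In your $p>1$ sub-case you assert a dichotomy --- either Bob's progress reaches $n$ first, or it ``advances by exactly one on every turn in the run'' --- but these branches are not exhaustive, and the second is actually contained in the first: Bob's progress is at least $1$ after he plays the T at position $p-1$, so if it increased by one on each of the $n$ turns of the run it would exceed $n$ and Bob would already have won. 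The case your split omits is the one that carries all the difficulty: on Alice's turns Bob's progress can \emph{drop} (her forced H fails to extend his current prefix match), and one must then control what the new, shorter matching prefix of $B$ can be against an output ending in $\textrm{T}\textrm{H}^{j}$. You acknowledge this yourself (``careful analysis shows\ldots'', ``once that is in hand''), so the hardest step of the lemma is asserted rather than proved.

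For comparison, the paper routes this step through a single intermediate claim: if Alice wins, Bob must supply every other H of the terminal run $\textrm{H}^n$, which forces his string to contain $n-1$ consecutive Hs, i.e., $B\in\{\textrm{H}^{n-1}\textrm{T},\ \textrm{T}\textrm{H}^{n-1}\}$; it then checks directly that $\textrm{T}\textrm{H}^{n-1}$ makes Bob win, leaving only $\textrm{H}^{n-1}\textrm{T}$ with $n$ odd. (The paper's own justification of that intermediate claim is itself terse, but it is at least a clean, checkable statement.) To repair your argument, either prove such an intermediate claim or redo the $p>1$ analysis with the drop case included: track that once Bob's progress falls to a value not exceeding the current run length, his matching prefix must be a block of Hs, so sustaining the run forces $B$ to begin with at least $n-1$ Hs --- contradicting the ``Neither'' hypothesis.
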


\begin{proof}
Alice cannot win on Bob's turn. Indeed, if Bob puts down the last H,  which means his string contains $n$ Hs, which is impossible. For Alice to win, Bob needs to place the second to last H. This means Bob's string contains $n-1$ Hs. If  $B_1= $ T and $n > 1$, then Bob wins. This is because after Bob chooses T, Bob and Alice are synchronized, and since Bob requires fewer H's he gets his string first. Thus, Alice wins only if Bob has the same $(n-1)$-prefix as Alice, and by Lemma~\ref{lemma:largecommonprefix} this means that $n$ must be odd.

I f Bob has all Hs in odd positions, then Alice will supply all the Hs that Bob needs. Each game output equals Bob's prefix and Bob wins in $n$ moves. On the other hand, if Bob has Hs in every even position, we can insert an imaginary H into the start of his string, and use the same logic. Thus, ignoring the first H, the game proceeds with Bob's string. The game will end in $n+1$ moves.

Since Alice's string is made up of only Hs, she will always pick H as her choice. Thus, the only way for Bob to win is if he has Hs either in all even or all odd positions. If this is not the case, then the game is infinite as neither player can achieve their string.
\end{proof}

\begin{lemma}
If Bob has HHH...HHH, the only way for him to win is if Alice has the same string except for the last letter, and $n$ is even. Otherwise, Alice wins if and only if Alice has Hs in either all even or all odd spots. In all other cases, the game is infinite.
\end{lemma}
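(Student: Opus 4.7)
The plan is to parallel the previous lemma, switching the roles of the two players. Since Bob's string is $H^n$, on every turn Bob places $H$ regardless of his progress. Consequently, every output character at an even position is $H$ (placed by Bob). If Alice's string $A$ appears as a substring of the output starting at position $i$, then for each $j = 1, \ldots, n$ the position $i+j-1$ is even in the output iff $j$ has opposite parity to $i$, and those characters of $A$ must therefore be $H$. A case-split on the parity of $i$ yields: $A$ can appear in the output only if $A$ has $H$ at every odd position (when $i$ is even) or at every even position (when $i$ is odd). This gives a necessary condition for Alice to win.

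To determine when Bob wins, let $k$ denote the leading $H$-count of $A$; since $A \neq B$ we have $k \leq n-1$. I would show that whenever the output ends in $H^{n-1}$ immediately before an Alice turn, Alice's progress is exactly $k$: any match of length $a > k$ would force $A_{k+1} = T$ to lie within the terminal $H^{n-1}$ block, which requires $a \geq n+k$ and contradicts $a \leq n$. Since Alice then plays $A_{k+1} = T$, she interrupts the run, so $H^n$ can only be formed during the initial all-$H$ phase. Tracing that phase, Alice places $H$ as long as her progress is less than $k$, and is then forced to play $A_{k+1} = T$; the run reaches $H^n$ before this interruption precisely when $k = n-1$ and turn $n$ is Bob's, i.e., when $n$ is even and $A = H^{n-1}T$ (the claimed exception).

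For sufficiency when $A$ has $H$ at every even position, a direct induction shows that the output after turn $j$ equals $A_1 A_2 \ldots A_j$ (Bob's forced $H$s coincide with the even-position $H$s of $A$), so $A$ appears and Alice wins at turn $n$. For $A$ with $H$ at every odd position and $A \neq H^{n-1}T$, the leading $H$-count $k$ is odd with $k \leq n-2$. Alice places her first $T$ at turn $k+2$, producing output $H^{k+1}T$ with progress $k+1$ (even). Thereafter, I claim her progress increments by exactly $1$ every turn: on each Bob turn her progress is even and $A_{a+1}$ is at an odd position of $A$ (hence $H$, matching Bob's $H$); on each Alice turn her progress advances by $1$ by definition. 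Her progress reaches $n$ at turn $n+1$, and Alice wins.

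Finally, if $A$ has neither of the two properties, then by the parity observation $A$ can never appear in the output, and by the Bob-win analysis we are not in the exception; hence neither player can ever win and the game is infinite. The main subtlety lies in the synchronized-phase argument in the all-odd case: after Alice's first $T$, she may still be forced to place further $T$s at subsequent even positions of $A$, yet one must verify that the parity of her progress at every Bob turn remains even throughout the phase, so that Bob's $H$ always matches the next odd-position $H$ of $A$ and the progress continues to climb by exactly $1$ per turn.
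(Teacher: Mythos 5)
Your proposal is correct and follows essentially the same approach as the paper, which simply defers to the preceding lemma (Alice with all Hs) with the roles swapped: Bob's forced Hs at even output positions give the parity-based necessary condition, synchronization gives the win in $n$ moves (all-even case) or $n+1$ moves (all-odd case), and a run-interruption argument isolates the $A = \mathrm{H}^{n-1}\mathrm{T}$, $n$ even exception. Your write-up is in fact more detailed than the paper's one-sentence proof; the only spot to tighten is the jump from ``Alice interrupts any terminal run of $n-1$ Hs'' to ``$\mathrm{H}^n$ can only form in the initial phase,'' which needs the short additional check that a run cannot reach length $n-1$ immediately after an Alice turn later in the game except when $A=\mathrm{H}^{n-1}\mathrm{T}$ or $A=\mathrm{T}\mathrm{H}^{n-1}$.
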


\begin{proof}
With minor adjustments, the proof is the same as in the previous lemma. If Alice has all Hs in even positions, the game ends in $n$ moves. If she has all Hs in odd positions, the game ends in $n+1$ moves. 
\end{proof}

We have partial results for alternating strings.

\begin{lemma}
If one player has an alternating string starting with H: HTHTHT... and the other player has a string that starts with TT, then the person with the alternating string wins.
\end{lemma}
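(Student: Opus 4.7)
The plan is to split into two symmetric cases based on which player holds the alternating string, and in each case to argue by induction on the turn number that the entire game output is itself an alternating string. Once that invariant is established, the non-alternating player's progress can never reach $2$ (since the alternating output contains no TT as a substring), so it is pinned in $\{0,1\}$, while the alternating player's progress tracks the output and forces a win.

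\textbf{Case 1: Alice alternates.} With $A=\text{HTHT}\ldots$ and $B$ beginning with TT, I would prove the invariant that after turn $k$ the output $G^k$ equals $A_1\cdots A_k$, and the state is $(k,0,B)$ for odd $k$ and $(k,1,A)$ for even $k$. On an odd turn, Alice (with progress $k-1$) appends $A_k=\text{H}$; no suffix of the H-ending output matches the T-initial $B$, so $b_k=0$. On an even turn, Bob (with progress $0$) appends $B_1=\text{T}$, which coincides with $A_k$; the length-$1$ suffix T matches $B_1$, but every longer suffix of an alternating output begins with H or HT rather than TT, giving $b_k=1$. Alice's progress reaches $n$ exactly at turn $n$ while Bob's never exceeds $1$, so Alice wins.

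\textbf{Case 2: Bob alternates.} With $B=\text{HTHT}\ldots$ and $A$ beginning with TT, the output turns out to be $\text{THTH}\ldots$, shifted by one because Alice moves first. The invariant to check is that $G^k$ equals the length-$k$ prefix of $\text{THTH}\ldots$, with $a_k\in\{0,1\}$ and $b_k=k-1$ for every $k\ge 1$. The delicate point is that Bob's progress increments by one on \emph{every} turn, not just his own: when Alice appends T, the suffix of the output starting at position~$2$ is itself an alternating prefix of $B$, which is extended by the new T. Meanwhile Alice's TT-prefix never appears as a suffix of the alternating output, so her progress stays in $\{0,1\}$. Bob's string therefore first appears as the length-$n$ block $G^{n+1}[2{:}n{+}1]$, and he wins on turn $n+1$.

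\textbf{Main obstacle.} The argument is essentially bookkeeping, and the one place that needs care is verifying in Case~2 that Alice's appending of T causes Bob's progress to grow by exactly one---not more and not zero. This follows because every suffix of the length-$k$ alternating output either begins with T (failing to match $B_1=\text{H}$) or begins with HTHT$\ldots$ (matching $B$ along the whole length of the suffix); selecting the longest such suffix yields $b_k=k-1$. A symmetric suffix-analysis handles Case~1, and no new technique is required beyond the state-based reasoning already introduced in Section~\ref{sec:defs} and employed in Theorem~\ref{thm:Aliceforcesloss}.
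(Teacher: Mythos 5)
Your proof is correct and takes essentially the same route as the paper's: the output is forced to be the alternating string, which pins the TT-player's progress at $0$ or $1$ while the alternating player's progress climbs to $n$, giving a win on turn $n$ (Alice alternating) or $n+1$ (Bob alternating). The paper states this in two sentences --- the TT-player always plays T, the alternating player always plays H --- and your state-by-state induction is just that argument made explicit.
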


\begin{proof}
The player, whose string starts with TT, is unable to continue past this prefix due to the alternating string of the other player. Hence, they always choose T. The player with the alternating string always chooses H. If Alice has an alternating string, she wins in $n$ moves. If Bob has an alternating string, he wins in $n+1$ moves, since the first move made by Alice will not benefit him.
\end{proof}

\section{Computational data}\label{sec:cd}

We wrote several programs to analyze this game. Our results are in this section.

\subsection{The game length for finite games}

The longest game length for a finite game as a function of string length $n$ is provided by the sequence
\[1,\ 3,\ 4,\ 8,\ 9,\ 13,\ 18,\ 22,\ \ldots.\]

For example, suppose $n=2$. We show all possible combinations in Table~\ref{table:n2}, where we assume that Alice's string starts with H. The longest finite game happens if Alice chooses HH and Bob chooses TH.

\begin{table}[ht!]
\begin{center}
\begin{tabular}{|c|c|c|c|}
\hline
Alice & Bob & Output & Winner\\
\hline
HH & HT & HT & Bob\\
HH & TH & HTH & Bob\\
HH & TT & HTHTHT... & Infinite\\
HT & HH & HH & Bob\\
HT & TH & HT & Alice\\
HT & TT & HT & Alice\\
\hline
\end{tabular}
\end{center}
\caption{Different Outcomes for $n = 2$.}
\label{table:n2}
\end{table}

\subsection{Number of infinite games}

We assume that Bob's string is not equal to Alice's string. So the number of possible pairs of strings is $2^n (2^{n} - 1)$ for a given length $n$.

The following Table~\ref{table:ndo} provides a tally of the results.

\begin{table}[ht!]
\begin{center}
\begin{tabular}{cc|ccc}\\
Size & Total & Bob Wins & Alice Wins & Infinite \\\hline
1 &2&  0 & 2 & 0\\
2 &12 & 6 & 4 & 2\\
3 &56& 16 & 26 & 14\\
4 &240& 84 & 64 & 92\\
5 &992& 238  & 290  & 464\\
6 &4032& 916& 756 & 2360\\
7 &16256& 2636 & 2932 & 10688\\
8 &65280& 8942 & 7774 & 48564\\
\end{tabular}
\end{center}
\caption{Number of different outcomes.}
\label{table:ndo}
\end{table}

Table~\ref{table:pdo} shows a proportion of the outcomes. The results are approximate.

\begin{table}[ht!]
\begin{center}
\begin{tabular}{cc|ccc}\\
Size & Total & Bob Wins & Alice Wins & Infinite \\\hline
1 &2&  0 & 1 & 0\\
2 &12 & 0.5 & 0.3 & 0.17\\
3 &56& 0.29 & 0.46 & 0.25\\
4 &240& 0.35 & 0.27 & 0.38\\
5 &992& 0.24  & 0.29  & 0.47\\
6 &4032& 0.23 & 0.19 & 0.59\\
7 &16256& 0.16 & 0.18 & 0.66\\
8 &65280& 0.14 & 0.12 & 0.74\\
\end{tabular}
\end{center}
\caption{Proportion of different outcomes.}
\label{table:pdo}
\end{table}

\begin{lemma}\label{lemma:increaseinfportion}
The proportion of infinite games increases with the increase of the string length.
\end{lemma}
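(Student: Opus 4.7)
The plan is to compare $I_n$ and $I_{n+1}$ by studying how infinite games behave under extending both strings by a single character, and then to reduce the inequality $p_{n+1}\ge p_n$ to a count of ``new'' infinite games created by the extension.

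First I would establish an extension lemma: if $(A,B)$ is an infinite $n$-game, then for every $c,d\in\{\mathrm{H},\mathrm{T}\}$ the $(n+1)$-pair $(Ac,Bd)$ is also infinite. In an infinite game the progress stays strictly below $n$ at every step, so the flip rule only ever consults positions $1,\dots,n$ of the two strings; those positions are unchanged under the extensions, so the infinite toss sequence is identical and neither player ever attains progress $n+1$ either. Since $A\ne B$ forces $Ac\ne Bd$ for all $c,d$, this is a clean $4$-to-$1$ injection, yielding $I_{n+1}\ge 4 I_n$.

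Next, writing $\alpha_n:=2^n(2^n-1)$ and using the identity $\alpha_{n+1}=4\alpha_n+2^{n+1}$, a short manipulation shows $p_{n+1}\ge p_n$ is equivalent to
\[
(I_{n+1}-4I_n)(2^n-1)\ \ge\ 2I_n.
\]
By the extension lemma, $I_{n+1}-4I_n$ counts exactly the infinite $(n+1)$-games whose $n$-truncation is \emph{not} an infinite $n$-game; and by Lemma~\ref{lemma:largecommonprefix}, every $(n+1)$-pair whose truncation satisfies $A=B$ gives a finite game. So $I_{n+1}-4I_n$ equals the number of infinite $(n+1)$-games whose truncation is a finite $n$-game, and it remains to produce at least $\lceil 2I_n/(2^n-1)\rceil$ such pairs.

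The main obstacle is this last count. My approach would be to exploit the winning moment: given a finite $n$-game in which (say) Bob wins at step $k$, consider the extension where Bob's last character is flipped ($B'=B\overline{B_n}$), so that at step $k$ he reaches progress $n$ but falls short in the $(n+1)$-game; for a suitable choice of Alice's extra character $c$ the play continues past step $k$ and reenters a previously visited state, making the extended game infinite. A case analysis on the identity of the winner, whether the decisive toss was placed by the winner or the opponent, and the shape of the last few characters of $A$ and $B$ should isolate structural families of finite $n$-games admitting such an infinite extension. The delicate piece is the cycle detection: one must argue that the state of the extended play is forced into a loop rather than into the opponent's string, likely via an invariant that tracks recurrence of flip-rule states. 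An alternative, possibly cleaner route would be to produce an explicit injection from a distinguished subfamily of infinite $n$-games (for instance the HH-versus-TT pairs of Corollary~\ref{cor:HHvsTT}) into the set of ``new'' infinite $(n+1)$-games, avoiding the full case analysis.
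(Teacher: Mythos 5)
Your extension lemma is correct and matches the first step of the paper's own proof: in an infinite game neither player's progress ever reaches $n$, so appending arbitrary letters to both strings leaves the entire play unchanged, and the resulting $4$-to-$1$ injection gives $I_{n+1}\ge 4I_n$. Your reduction of $p_{n+1}\ge p_n$ to the inequality $(I_{n+1}-4I_n)(2^n-1)\ge 2I_n$ is also correct, and it is the honest quantitative formulation of what remains to be shown.

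The gap is that you never close that remaining inequality. By Corollary~\ref{cor:HHvsTT} we have $I_n\ge 2^{2n-3}$, so the required number of ``new'' infinite $(n+1)$-pairs, $2I_n/(2^n-1)$, exceeds $2^{n-2}$ (and, judging from the computed proportions, is actually on the order of $2^n$). Your proposal only sketches two possible routes to such a family --- flipping the winner's last letter and arguing the extended play falls into a cycle, or an unspecified injection from a subfamily of infinite $n$-games --- and neither is carried out; the ``cycle detection'' you flag as delicate is exactly the step that would need a real argument, and the alternative route is problematic because the HH-versus-TT pairs are already infinite at length $n$, so their extensions are absorbed into the $4I_n$ term rather than contributing new games. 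For what it is worth, the paper's own proof stops at the same place: after the extension argument it exhibits a single explicitly constructed new infinite pair and concludes, which establishes only $I_{n+1}\ge 4I_n+2$ and hence monotonicity of $I_n/4^n$, not of $I_n/\bigl(2^n(2^n-1)\bigr)$. So you have correctly located the genuine difficulty --- more precisely than the paper does --- but neither your proposal nor the published proof supplies the required count of new infinite games.
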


\begin{proof}
If the game is infinite for Alice's string $X$ and Bob's string $Y$ of the same length, then it is infinite for any longer string pairs where Alice's prefix is $X$ and Bob's prefix is $Y$.

Additionally, with each increasing value of $n$, we get new pairs of strings that result in an infinite game. Consider an example, where Alice has HHH$\dots$H and Bob has HHH$\dots$HHT, both of length $n$. By Lemma~\ref{lemma:largecommonprefix} one of the players wins in $n$ moves. For strings of length $n+1$ let Alice add letter H at the end of her string and let Bob add letter T at the end of his string. By Theorem~\ref{thm:infinitegame} the new game is infinite.
\end{proof}

From Corollary~\ref{cor:HHvsTT} we know that the number of string pairs that end in an infinite game is at least $2^{2n-3}$, which is at least 1/8 of all the pairs. Lemma~\ref{lemma:increaseinfportion} informs us that the portion of such games increases alongside $n$.

From Lemma~\ref{lemma:largecommonprefix} we know that the number of string pairs of length $n$ that end in someone's win is at least $2^n$.

\section{Future research}

Our examination of the No-Flippancy game yielded intriguing characteristics. The research can be continued as follows.

\begin{enumerate}
    \item We showed that if the game takes more than $4n-4$ turns it must be infinite. We also found games that take $2n-2$ turns. Our computational results show that the largest game length of a finite game lies somewhere in between. If would be interesting to know the true maximum length of a finite game. This question is related to a discussion of which game states are possible, and also which sequences of game states are possible.
    \item For all $n > 1$, Alice and Bob can force a win given the opponent's string. For all $n > 5$, they can also force an infinite game. Finally, in many cases, they can also force a loss. The case for forcing a loss was not completely analyzed. For example, one might notice that our computational results show that for all of Alice's strings for which Bob cannot force a loss, if they start with H and not all the characters are Hs, then they have at least two Ts at the end. It might be possible to explicitly describe all the strings for which Bob cannot force a loss.  
    \item With certain string assignments, we can determine who the winner must be without simulating the game. These partial results could be extended to more cases.
\end{enumerate}

There are yet a myriad of other directions in which research for the No-Flippancy game may be undertaken. We briefly list potential research avenues below:

\begin{enumerate}
    \item Alice and Bob do not have to alternate turns. They can pick turns randomly or follow some sequence. For example, one natural sequence for taking turns is the Thue-Morse sequence.
    \item Alice and Bob can use more outcomes than just H and T. For example, they can use a dice instead of a coin, or any set of letters.
	\item We can increase the number of players. For example, we can add Charlie to the company of Alice and Bob.
\end{enumerate}

We hope that other researchers are as fascinated with this game as we are and will continue studying it.

\section{Acknowledgments}

This project was done as part of MIT PRIMES STEP, a program that allows students in grades 7 through 9 to try research in mathematics. Tanya Khovanova is the mentor of this project. We are very grateful to PRIMES STEP for this opportunity.

\end{document}